\newtheorem{assumption}{Assumption} 
\newtheorem{proposition}{Proposition} 
\newtheorem{lemma}{Lemma} 
\newtheorem{remark}{Remark}
\newcommand{\minimize}[2]{\ensuremath{\underset{\substack{{#1}}}%
{\mathrm{minimize}}\;\;#2 }}
\newcommand{\ran}{\ensuremath{\operatorname{ran}}}
\newcommand{\as}{\ensuremath{\text{\rm $\PP$-a.s.}}}
\newcommand{\EC}[2]{{\mathsf E}(#1\! \mid\! #2)} 
\newcommand{\Argmind}[2]{\ensuremath{\underset{\substack{{#1}}}%
{\mathrm{Argmin}}\;\;#2 }}
\renewcommand{\leq}{\ensuremath{\leqslant}}
\renewcommand{\geq}{\ensuremath{\geqslant}}
\renewcommand{\le}{\ensuremath{\leqslant}}
\renewcommand{\ge}{\ensuremath{\geqslant}}
\newcommand{\FF}{\ensuremath{{\EuScript F}}}
\newcommand{\PP}{\ensuremath{\mathsf P}}
\newcommand{\EE}{\ensuremath{\mathsf E}}
\newcommand{\Span}{\ensuremath{\operatorname{span}}}
\renewcommand{\cv}{\mathbf{c}}
\newcommand{\XX}{\ensuremath{\boldsymbol{\EuScript{X}}}}
\newcommand {\newtext}[1]{\textcolor{black}{#1}}
\newcommand {\correcEC}[1]{\textcolor{black}{#1}}
\title{A Stochastic Majorize-Minimize Subspace Algorithm for Online Penalized Least Squares Estimation}
\author{Emilie Chouzenoux and Jean-Christophe Pesquet
\thanks{E. Chouzenoux (corresponding author) is with the Laboratoire d'Informatique Gaspard 
Monge, UMR CNRS 8049,
Universit\'e Paris-Est, 77454 Marne la Vall\'ee 
Cedex 2, France. E-mail: \texttt{emilie.chouzenoux@univ-paris-est.fr}.
J.-C. Pesquet is with the Center for Visual Computing, CentraleSupelec, University Paris-Saclay,
92295 Chatenay-Malabry, France. E-mail: \texttt{jean-christophe@pesquet.eu}. 
This work was supported by the CNRS Imag'in project under grant 2015 OPTIMISME. 
Part of it was presented at the EUSIPCO 2014 conference \cite{Chouzenoux14eusipco}.}
}
\begin{document}

\maketitle

\begin{abstract}
Stochastic approximation techniques play an important role in solving many problems encountered in machine learning or adaptive signal processing. 
In these contexts, the statistics of the data are often unknown a priori or their direct computation is too intensive, and they have thus to be estimated online from the observed signals.
For batch optimization of an objective function being the sum of a data fidelity term and a penalization 
(e.g. a sparsity promoting function), Majorize-Minimize (MM) methods have recently attracted much interest since they are fast, highly flexible, and effective in ensuring convergence. The goal of this paper is to show how these methods can be successfully extended to the case when the data fidelity term corresponds to a least squares criterion 
and the cost function is replaced by a sequence of stochastic approximations of it. In this context, we propose an online version of an MM subspace algorithm and we study its convergence by using suitable probabilistic tools.
Simulation results illustrate the good practical performance of the proposed algorithm associated with a memory gradient subspace,
when applied to both non-adaptive and adaptive filter identification problems.
\end{abstract}

%
%\begin{keywords}
{\small \emph{Keywords}: stochastic approximation, optimization, subspace algorithms, memory gradient methods, descent methods, recursive algorithms, majorization-minimization, filter identification, Newton method, sparsity, machine learning, adaptive filtering.}
%\end{keywords}
%

\section{Introduction}
A classical problem in data sciences consists of inferring the structure of a linear model linking some observed
random variables $(\Xv_n)_{n\ge 1}$ in $\eR^{N\times Q}$ to some other observed random variables $(\yv_n)_{n\ge 1}$ in $\eR^Q$.
Unless otherwise specified, we will assume in this work that the following wide-sense stationarity properties hold:
\begin{align}
(\forall n \in \correcEC{\eN \setminus \left\{0\right\}})\qquad & \EE(\|\yv_n\|^2) = \varrho \\
&\EE(\Xv_n \yv_n ) = \rb \label{e:statrb}\\
&\EE(\Xv_n \Xv_n^\top ) = \Rb, \label{e:statRb}
\end{align}
where $\varrho\in (0,+\infty)$, $\rb \in \eR^N$, $\Rb\in \eR^{N\times N}$ is a symmetric positive semi-definite matrix, \newtext{$\EE(\cdot)$ denotes the mathematical expectation, and $\|\cdot\|$ is the Euclidean norm.}
We will then be interested in the following optimization formulation:
\begin{equation}\label{e:probdet}
\minimize{\hb\in \eR^N}{F(\hb)},
\end{equation} 
\correcEC{with\footnote{The wide sense stationarity assumption makes $F$ independent of the choice of $n \in \eN \setminus \left\{0\right\}$ .}} 
\begin{equation}
(\forall \hb\in \eR^N)\quad
 F(\hb) =\frac12 \EE\big(\|\yv_n-\Xv_n^\top \hb\|^2\big)+ \Psi(\hb)\newtext{,}
\end{equation}
%Throughout the paper, $\EE(\cdot)$ denotes the mathematical expectation, $\|\cdot\|$ is the Euclidean norm, and 
\newtext{where} $\Psi$ is a function from $\eR^N$ to $\eR$, playing the role of a regularization function.
 \correcEC{This penalty} function may \correcEC{for instance} be useful to incorporate some prior knowledge about the sought parameter vector $\hb$, e.g. some sparsity requirement, possibly in some transformed domain. \correcEC{In this paper, a family of differentiable, non necessarily convex, regularization functions  \cite{Chouzenoux2012} is considered.} 
Problem \eqref{e:probdet} is encountered in numerous applications such as system identification, channel equalization, linear prediction or interpolation, 
echo cancellation, interference removal, and supervised classification. In the latter area, $(\Xv_n)_{n\ge 1}$ are vectors ($Q=1$) which may correspond
to features obtained through some nonlinear mapping of the data to be classified in a given training sequence, and $(\yv_n)_{n\newtext{\ge 1}}$ may be 
the associated (discrete-valued) class index vector~\cite{Bottou2004,ML2011,Theodoridis_S_2015_Machine_L}.
Although some other measures (e.g. the logistic regression function) are often more effective in this context, the use of a least squares
criterion may still be competitive for simplicity reasons~\cite{Rifkin03,Rosasco_L_2016_GURLS}, while the regularization term serves here to avoid overfitting which could arise
when the number of extracted features is large~\cite{Rosasco2007}. Signal reconstruction constitutes another application field of interest. Then, the vector $\hb$ corresponds to an unknown signal \newtext{related to some measurements} $(\yv_n)_{n\ge 1}$ obtained through
products with \newtext{matrices} $(\Xv_n^\top)_{n\ge 1}$, and additionally \newtext{corrupted by some noise process}  \cite{Peyrera15,chapitre_livre_wiley, DinizBook}. Each \newtext{matrix} $\Xv_n^\top$ with \correcEC{$n\in \eN \setminus \left\{0\right\}$} corresponds to \newtext{$Q$ lines} of the \newtext{full} acquisition matrix and it is here considered as random. Under suitable stationarity assumptions, the classical
least squares data fidelity term can be modeled as $\EE\big(\|\yv_n-\Xv_n^\top \hb\|^2\big)/2$, whereas due to the ill-posedness
of the great majority of such inverse problems, a regularization term $\Psi$ needs to be introduced so as to obtain reliable estimates \cite{demoment1989image}.

Many optimization algorithms can be devised to solve Problem \eqref{e:probdet} depending on the assumptions made on $\Psi$ 
\cite{Nocedal99,Combettes_2010,Afonso2011,Boyd_S_2011_j-found-tml_distributed_osl_admm}. \newtext{
In this work, we will be interested in Majorize-Minimize (MM) algorithms \cite{Hunter04,Zhang2007}. In such approaches, the iterates result from successive minimizations of simple surrogates (e.g. quadratic surrogates) majorizing the cost-function. MM algorithms are very flexible and benefit from good theoretical
and practical convergence properties. However, the computation load resulting from the minimization of the majorant function may be prohibitive in the context of large scale problems. The strategy we will adopt in this work is to account for subspace acceleration \cite{Zibulevsky10}, i.e., to constrain the inner minimization step to a subspace of low dimension, typically restricted to the gradient computed at the current iterate and to a memory part (e.g. the difference between the current iterate and a previous one). In a number of recent works \cite{Chouzenoux2011a,Chouzenoux2012,Florescu2014}, MM subspace algorithms provide fast numerical solutions to optimization problems involving smooth functions}, in particular in the case of large-scale problems. Note that, although our approach will \newtext{require} that $\Psi$ is a differentiable function, it has been shown that tight approximations of nonsmooth penalizations such as 
$\ell_1$ (resp. $\ell_0$) functions, namely $\ell_2-\ell_1$ (resp. $\ell_2-\ell_0$) functions, can be employed and are often quite effective in practice \cite{Chouzenoux2012,Florescu2014}. Another advantage of the 
class of optimization methods under investigation is that their convergence can be established under some technical assumptions,
even in the case when $\Psi$ is a nonconvex function (see \cite{Chouzenoux2012} for more details).

One of the difficulties encountered in machine learning or adaptive processing is that  Problem~\eqref{e:probdet} cannot be directly solved since
the second-order statistical moments $\varrho$, $\rb$ and $\Rb$ are often unknown a priori or their direct computation is too intensive, and they have thus to be estimated online.
% from the related time series.
In the simple case when $\Psi=0$, the classical Recursive Least Squares (RLS) algorithm can be used for this purpose \cite{Haykin2002}. 
When $\Psi$ is nonzero, stochastic approximation algorithms have been developed such as the celebrated stochastic gradient descent (SGD) algorithm \cite{Robbins1951,Ermo67,Guse71,Bertsekas_D_20000-siopt_Gradient_cgme} and some of its proximal extensions \cite{Atchade_Y_2014_Stochastic_pgga,Rosasco_L_2014_Convergence_spga,Combettes_P_2016_pafa_2016_Stochastic_apfbsmo,Konecny2014}. 
%SGD is known to be robust and easy to implement, but its 
The convergence speed of SGD may be relatively slow so that
various extensions of it have been developed to alleviate this problem (see \cite{Polyak1992,Nemirovski08,Bach2011,Peyrera15} and the references therein).
Many efforts have also been devoted to developing adaptive variants of this algorithm \cite{Widrow1985,Macchi_0_1995_book_adaptive_plmsa}, in particular when identifying filters having sparse impulse responses
(see e.g. \cite{Hoshuyama_O_2004_p-icassp_Generalized_pvssa,Khong_A_2006_p-asilomar_Efficient_usaf,Chen_Y_2009_p-icassp_Sparse_lms,Paleologu2010,Murakami2010,Meng_R_2011_procssdp_sparsity_aapaasi,Markus_L_2013_p-icassp_Affine_passi}). In addition, in \cite{Kopsinis_Y_2011_ieee-tsp_online_ssisr}, a set theoretic approach is adopted for online sparse estimation based on projections onto weighted $\ell_1$ balls, which is extended in \cite{Slavakis_K_2013_ieee-tsp_Generalized_tosal} by making use of generalized thresholding mappings.
It is worth noting that a sparse RLS algorithm was proposed in \cite{Babadi_B_2010-ieee-tsp_SPARSELS_srlsa} for complex-valued signals in the case when $\Psi$ is an $\ell_1$ norm.
An online variant of the RLS algorithm corresponding to a time weighted LASSO estimator was also designed in \cite{Angelosante2010} which relies on a coordinate descent approach.
% only the l1 penalty (without linear operator)
% the penalty function is decaying with n
A similar problem was also addressed in \cite{Themelis_K_2014_ieee-tsp_Variational_bfsae} by adopting a novel Bayes variational approach, for which weak theoretical convergence guarantees however exist.
If we except \cite{Ono_S_2013_p-icassp_sparse_siu} where an adaptive primal-dual splitting is employed to deal with a total variation penalization, in almost all the works on sparse adaptive filtering,
the sparsity is directly imposed on the filter coefficients, without introducing any linear transform of them.

Designing Majorize-Minimize optimization algorithms in a stochastic context constitutes a challenging task since most of the existing works concerning these methods 
have been focused on batch optimization procedures, and the related convergence proofs usually rely on deterministic
tools. We can however mention a few recent works \cite{Xiao2010,Mairal2013,Carlson16} where stochastic MM algorithms
have been investigated for general loss functions under specific assumptions (e.g. the independence of the involved random variables \cite{Xiao2010,Mairal2013}),
but without introducing any search subspace. Works which are more closely related to ours are those based on Newton or quasi-Newton
stochastic algorithms \cite{Birge1995,Bordes2009,Yu2010,Byrd2014,Gower2016a,Gower2016b}, in particular the approaches in \cite{Yu2010,Byrd2014} provide extensions of BFGS algorithm, but proving the convergence of these algorithms requires some specific assumptions. Like BFGS \newtext{approaches}, MM subspace methods
use a memory of previous estimates so as to accelerate the convergence.

\correcEC{Our main contributions in this paper are:
\begin{itemize}
	\item[$\bullet$] to propose an online version of the MM subspace algorithm from~\cite{Chouzenoux2011a,Chouzenoux2012}, for a wide class of penalized least squares problems,
	\item[$\bullet$] to derive a recursive form, with reduced complexity, of the resulting online MM subspace method, 
	\item[$\bullet$] to prove the convergence of the iterates produced by our method in the stochastic context,
	\item[$\bullet$] to show the good practical performance of this method when it is combined with a memory gradient subspace.
\end{itemize}
}

In Section \ref{se:probform}, we show how Problem \eqref{e:probdet} can be reformulated in a learning context. 
The MM strategy which is proposed in this work is described in Section \ref{s:Mprop}.
In Section~\ref{se:recur}, we give the form of the resulting recursive algorithm and, in Section \ref{se:complex}, we evaluate
its computational complexity. A convergence analysis of the proposed stochastic Majorize-Minimize subspace algorithm is performed in Section \ref{sec:convergence}. In Section \ref{se:ident}, two simulation examples in the context of filter identification illustrate the good performance of our algorithm when a memory gradient subspace is employed. Some conclusions are drawn in Section \ref{se:conclu}.

\section{Problem formulation}\label{se:probform}

\begin{table}[t]
\tiny
\centering
\caption{Smooth penalty functions $\psi_s$ \correcEC{fulfilling Assumption~\ref{a:basic}} and their associated weighting functions $\nu_s$. 
All expressions are valid for $t \in \eR$, $(\lambda_s,\delta_s)\in (0,+\infty)^2$ and $\kappa_s\in[1,2]$.}
\label{t:penalty}
\begin{tabular}{|p{0.3cm}|c|c|c|c|}
%\hline
\cline{2-5}
\multicolumn{1}{c|}{ } & $\lambda_s^{-1} \psi_s(t)$ & $\lambda_s^{-1} \nu_s(t)$ & Type & Name\\
\hline
\hline
&$|t|-\delta_s \log(|t|/\delta_s +1)$ & $(|t|+\delta_s)^{-1}$ & $\ell_2-\ell_1$ & \\
%\hline
\cline{2-5}
&$\begin{cases} t^2 & \mbox{if} \, |t| < \delta_s \\ 2 \delta_s |t| - \delta_s^2 & \mbox{otherwise} \end{cases}$ & 
$\begin{cases}  2 & \mbox{if} \, |t| < \delta_s \\ 2 \delta_s/ |t| & \mbox{otherwise} \end{cases}$ & $\ell_2-\ell_1$ & Huber\\
%\hline
\cline{2-5}
\rotatebox{90}{\parbox{3mm}{\multirow{3}{*}{Convex}}} 
&$\log(\operatorname{cosh}(t))$ & $\begin{cases} \operatorname{tanh}(t) / t & \mbox{if} \, t \neq 0 \\ 1 & \mbox{otherwise} \end{cases}$ & $\ell_2-\ell_1$ & Green\\
%\hline
\cline{2-5}
&$(1 + t^2 / \delta_s^2)^{\kappa_s/2} - 1$ & $ \kappa_s \delta_s^{-2} (1 + t^2 / \delta_s^2)^{\kappa_s/2 - 1}$ & $\ell_2-\ell_{\kappa_s}$ & \\ 
\hline
\hline
&$1 - \exp(-t^2 / (2 \delta_s^2))$ & $\delta_s^{-2} \exp(-t^2 / (2 \delta_s^2)) $ & $\ell_2-\ell_0$ & Welsch \\
%\hline
\cline{2-5}
&$t^2 / (2 \delta_s^2 + t^2)$ & $4\delta_s^2 / (2 \delta_s^2 + t^2)$ & $\ell_2-\ell_0$ & Geman\\
& & & & -McClure \\
%\hline
\cline{2-5}
&$\begin{cases} 1 - (1-t^2 / (6 \delta_s^2))^3 & \mbox{if} \, |t| \leq \sqrt{6} \delta_s \\
1 & \mbox{otherwise}
\end{cases}
$ 
&
$
\begin{cases} \delta_s^{-2} (1 - t^2 / (6 \delta_s^2))^2 & \mbox{if} \, |t| \leq \sqrt{6} \delta_s \\
0 & \mbox{otherwise}
\end{cases}
$ & $\ell_2-\ell_0$ & Tukey biweight \\
%\hline
\cline{2-5}
\rotatebox{90}{\parbox{3mm}{\multirow{3}{*}{Nonconvex}}} 
&$ \operatorname{tanh}(t^2 / (2 \delta_s^2))$ & $\delta_s^{-2} (\operatorname{cosh}(t^2 / (2 \delta_s^2))^{-2}$ & $\ell_2-\ell_0$ &  Hyberbolic\\
& & & & tangent \\
%\hline
\cline{2-5}
&$\log(1 + t^2 / \delta_s^2)$ & $2 / (t^2 + \delta_s^2)$ & $\ell_2-\log$ & Cauchy \\
%\hline
\cline{2-5}
&$1 - \exp(1-(1 + t^2 / (2\delta_s^2))^{\kappa_s/2})$ & $(\kappa_s/(2 \delta_s^2)) (1 + t^2 / (2\delta_s^2))^{\kappa_s/2 - 1} \exp(1-(1 + t^2 / (2\delta_s^2))^{\kappa_s/2})$ & $\ell_2 - \ell_{\kappa_s} - \ell_0$ & Chouzenoux\\
\hline
\end{tabular}
\end{table}

In a learning context, function $F$ can be replaced by a sequence $(\mathrm{F}_n)_{n\ge 1}$ of stochastic approximations of it, which are defined as
follows: for every \correcEC{$n\in \eN \setminus \left\{0\right\}$},
\begin{align}\label{e:defFn}
(\forall \hb\in \eR^N)\quad
 \mathrm{F}_n(\hb) &= \frac{1}{2\overline{\vartheta}_n}\sum_{k=1}^n \vartheta^{n-k}\|\yv_k-\Xv_k^\top \hb\|^2 + \Psi(\hb)\nonumber\\
 &= \frac12 \rho_n - \rv_n^\top \hb + \frac12 \hb^\top \Rv_n \hb  + \Psi(\hb),
\end{align}
where $\vartheta \in (0,1)$,
\begin{equation}\label{e:defbarvartheta}
\overline{\vartheta}_n = \sum_{k=0}^{n-1} \vartheta^{k} =
\begin{cases}
n & \mbox{if $\vartheta = 1$}\\
\displaystyle\frac{1-\vartheta^n}{1-\vartheta} & \mbox{if $\vartheta \in (0,1)$,}
\end{cases}
\end{equation}
and $\rho_n$, $\rv_n$, and $\Rv_n$ are given by
\begin{align}
\rho_n & = \frac{1}{\overline{\vartheta}_n} \sum_{k=1}^n \vartheta^{n-k} \|\yv_k\|^2 \label{e:defsamprho}\\
\rv_n  & = \frac{1}{\overline{\vartheta}_n} \sum_{k=1}^n \vartheta^{n-k} \Xv_k \yv_k \label{e:defsampr}\\
\Rv_n  & = \frac{1}{\overline{\vartheta}_n} \sum_{k=1}^n \vartheta^{n-k} \Xv_k \Xv_k^\top \label{e:defsampR}.
\end{align}
In the case when $\vartheta = 1$, we retrieve
the classical sample estimates of $\varrho$, $\rb$, and $\Rb$. When $\vartheta \in (0,1)$, it can be interpreted
as an exponential forgetting factor \cite{Haykin2002} which may be useful in adaptive processing scenarios (see Section \ref{sec:adapt}).
% unbiased estimate but inconsistent even in the i.i.d. case

Hereafter, we will assume that the regularization function $\Psi$ has the following form:
\begin{equation}\label{e:defPsi}
 (\forall \hb \in \eR^N)\quad \Psi(\hb) = \frac12 \hb^\top \Vb_0 \hb - \vb_0^\top \hb +\sum_{s=1}^S \psi_s(\|\Vb_s \hb - \vb_s\|)
\end{equation}
where $\vb_0 \in \eR^N$, $\Vb_0\in \eR^{N\times N}$ is a symmetric positive semi-definite matrix, and, for every $s\in \{1,\ldots,S\}$,
$\vb_s\in \eR^{P_s}$, $\Vb_s \in \eR^{P_s\times N}$, and $\psi_s \colon \eR \to \eR$ is a smooth function.
The first \correcEC{ two terms in \eqref{e:defPsi}} can be viewed as an elastic net penalty \cite{Zou05}, while
various choices can be made for the \correcEC{last term}. As shown in Table \ref{t:penalty}, in addition to quadratic
regularization functions (obtained when $S=1$ and $\psi_1 = 0$), $\ell_2-\ell_1$ functions and smoothed $\ell_2-\ell_0$ functions constitute standard choices. The matrices $(\Vb_s)_{1\le s\le S}$ may be set to identity or they may serve to model possible transforms or discrete differentiation operators, and vectors $(\vb_s)_{1\le s \le S}$ may be used to define reference values.

Note that the regularization strategy adopted in \cite{Angelosante2010} amounts to replacing $\Psi$ in \eqref{e:defFn} by $\lambda_n \overline{\Psi}$ where
$\overline{\Psi}$ is a (possibly weighted) $\ell_1$ norm and $\lambda_n\in [0,+\infty)$. Consistency results can then be established under the
assumption that $\vartheta = 1$ and $\lim_{n\to+\infty} \lambda_n = 0$. Our approach here is different, not only because we are interested in a wide class
of regularization functions, but also in the sense that we are looking for a solution
to the fully regularized problem \eqref{e:probdet} instead of a solution to the mean square criterion.

Our objective in the next section will be to propose an efficient recursive method for
minimizing functions $(\mathrm{F}_n)_{n\ge 1}$.

\section{Proposed method} \label{se:propmeth}
\subsection{Majorization property}\label{s:Mprop}
At each iteration \correcEC{$n\in \eN \setminus \left\{0\right\}$}, we propose to replace $\mathrm{F}_n$ by a surrogate function $\Theta_n(\cdot,\hv_n)$
based on the current estimate $\hv_n$ (computed at the previous iteration). More precisely, a tangent majorant function is chosen
such that
\begin{align}
& (\forall \hb \in \eR^N)\qquad \mathrm{F}_n(\hb) \le \Theta_n(\hb,\hv_n) \label{e:MMmaj}\\
& \mathrm{F}_n(\hv_n) = \Theta_n(\hv_n,\hv_n).
\end{align}
For the so-defined MM strategy to be worthwhile, the surrogate function has to be built in such a way that its minimization is simple.
For this purpose, the following assumptions will be made on the regularization function $\Psi$ defined in \eqref{e:defPsi}:
\begin{assumption}\label{a:basic}\ 
\begin{enumerate} 
\item \label{a:basicii} For every $s \in \left\{1,\ldots,S\right\}$, $\psi_{s}$ is an even lower-bounded function, which is continuously differentiable, and
$\lim_{\underset{t\neq 0}{t \to 0}}  \dot{\psi}_{s}(t)/t \in \eR$, where $\dot{\psi}_{s}$ denotes the
derivative of $\psi_{s}$.
\item \label{a:basiciii} For every $s\in \left\{1,\ldots,S\right\}$, $\psi_{s}(\sqrt{.})$ is concave on $[0,+\infty)$.
\item \label{a:basiciv}  There exists $\overline{\nu} \in [0,+\infty)$ such that
$(\forall s \in \{1,\ldots,S\})$
$(\forall t \in [0,+\infty))$ $0 \leq  \nu_s(t) \leq
\overline{\nu}$, where
$\nu_s(t) = \dot{\psi}_{s}(t)/t$.\footnote{The function is extended by continuity when $t=0$.}
\end{enumerate}
\end{assumption}
These assumptions are satisfied by a wide class of functions $\Psi$\correcEC{~\cite{Nikolova05}}, in particular 
those corresponding to the choices of the potential functions $(\psi_s)_{1\le s \le S}$ listed in Table \ref{t:penalty}.

\correcEC{
Assumption~\ref{a:basic} implies that each function $\psi_{s}$ is majorized at every $t \in \eR$, by a quadratic function, such that
\begin{equation}\label{e:ineqMMpsis}
(\forall t' \in \eR) \; \psi_{s}(t') \leq \psi_{s}(t) + \dot{\psi}_{s}(t)(t'-t) + \frac{1}{2} \nu_s(|t|)(t'-t)^2.
\end{equation}
Note that the above inequality is at the core of iterative reweighted least-squares algorithms~\cite{Bissantz09} and of half quadratic methods~\cite{Allain_M_2006_ieee-tip_On_galc} for the minimization of penalized quadratic functions. The following majorization then straightforwardly results from \eqref{e:ineqMMpsis}:
%Thus, we can deduce the following majorization result:
\begin{proposition}\label{p:majquad}
Under Assumption \ref{a:basic}, for every \correcEC{$n\in \eN \setminus \left\{0\right\}$} and $\hb\in \eR^N$,
a tangent majorant of $\mathrm{F}_n$ at $\hb$ is
\begin{align}
\label{e:defQc}
(\forall \hb' \in \eR^N)\quad
\Theta_n(\hb',\hb) =\;
&\mathrm{F}_n(\hb)+\nabla \mathrm{F}_n(\hb)^\top (\hb'-\hb) \nonumber\\
&+ \frac12 (\hb'-\hb)^\top \Av_n(\hb) (\hb'-\hb),
\end{align}
where $\Av_n(\hb)$ is given by 
\begin{align}
&\Av_n(\hb) = \Rv_n+\Vb_0+ \Vb^\top\Diag\big(\bb(\hb)\big)\Vb \in \eR^{N\times N}\label{e:defA}\\	
&\Vb = [\Vb_1^\top\ldots\Vb_S^\top]^\top\in \eR^{P\times N} \label{e:defVb}\\
&\vb = [\vb_1^\top\ldots\vb_S^\top]^\top\in \eR^{P}
\end{align}
with $P = P_1+\cdots+P_S$, and
$\bb(\hb)= \big(b_i(\hb)\big)_{1\le i \le P}\in \eR^{P}$ is such that
\begin{equation}\label{e:defbb}
\bb(\hb) = \left[\nu_{1}(\| \Vb_1 \hb-\vb_1\|) \unb_{P_1}^\top \,\ldots\, \nu_{S}(\| \Vb_S \hb-\vb_S\|) \unb_{P_S}^\top\right]^\top,
\end{equation}
where $\unb_P \in \eR^P$ denotes a vector of size $P$ with all entries equal to one.
%$(\forall s \in \{1,\ldots,S\})$ $(\forall p \in \{1,\ldots,P_s\})$
%\begin{equation}\label{e:defbb}
%b_{P_1+\cdots+ P_{s-1}+p}(\hb)=\nu_{s}(\| \Vb_s \hb-\vb_s\|).
%\end{equation}
\end{proposition}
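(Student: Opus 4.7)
My plan is to split $\mathrm{F}_n$ into three pieces and majorize each separately. The first piece is the quadratic data-fidelity part $Q_n(\hb) = \frac{1}{2}\rho_n - \rv_n^\top \hb + \frac{1}{2}\hb^\top \Rv_n \hb$, the second is the elastic-net quadratic $E(\hb) = \frac{1}{2}\hb^\top \Vb_0 \hb - \vb_0^\top \hb$, and the third is the sum $R(\hb) = \sum_{s=1}^S \psi_s(\|\Vb_s \hb - \vb_s\|)$. Since $Q_n$ and $E$ are already quadratic, their second-order Taylor expansions at $\hb$ are exact equalities with Hessians $\Rv_n$ and $\Vb_0$, respectively; the majorization for these two parts is thus immediate and contributes the terms $\Rv_n + \Vb_0$ to the matrix $\Av_n(\hb)$.

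The core of the argument is to majorize $R$. The key tool is Assumption \ref{a:basic}(\ref{a:basiciii}): for each $s$, the function $\varphi_s \colon u \mapsto \psi_s(\sqrt{u})$ is concave on $[0,+\infty)$. The concavity tangent inequality $\varphi_s(u') \le \varphi_s(u) + \varphi_s'(u)(u'-u)$, together with the identity $\varphi_s'(u) = \nu_s(\sqrt{u})/2$ (extended by continuity at $0$ thanks to Assumption \ref{a:basic}(\ref{a:basicii})), yields the pointwise quadratic majoration
\begin{equation*}
\psi_s(t') \;\le\; \psi_s(t) + \tfrac{1}{2}\nu_s(t)\bigl(t'^2 - t^2\bigr)
\qquad \bigl(t,t' \in [0,+\infty)\bigr),
\end{equation*}
which is exactly \eqref{e:ineqMMpsis} recast in squared arguments (and which is well-defined because $\nu_s$ is bounded by Assumption \ref{a:basic}(\ref{a:basiciv})). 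I would then apply this with $t = \|\Vb_s\hb - \vb_s\|$ and $t' = \|\Vb_s\hb' - \vb_s\|$, and expand
\begin{equation*}
\|\Vb_s\hb' - \vb_s\|^2 - \|\Vb_s\hb - \vb_s\|^2 = 2(\Vb_s\hb - \vb_s)^\top \Vb_s(\hb' - \hb) + \|\Vb_s(\hb' - \hb)\|^2.
\end{equation*}
The linear term on the right-hand side, after multiplication by $\nu_s(\|\Vb_s\hb - \vb_s\|)$, coincides precisely with $\nabla_\hb\bigl[\psi_s(\|\Vb_s\cdot - \vb_s\|)\bigr](\hb)^\top (\hb' - \hb)$, since $\nabla_\hb \psi_s(\|\Vb_s\hb - \vb_s\|) = \nu_s(\|\Vb_s\hb - \vb_s\|)\Vb_s^\top(\Vb_s\hb - \vb_s)$.

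Summing over $s \in \{1,\ldots,S\}$ and recognizing that
\begin{equation*}
\sum_{s=1}^S \nu_s(\|\Vb_s\hb - \vb_s\|)\,\Vb_s^\top \Vb_s = \Vb^\top \Diag\bigl(\bb(\hb)\bigr)\Vb,
\end{equation*}
with $\Vb$ and $\bb(\hb)$ as defined in \eqref{e:defVb}--\eqref{e:defbb}, gives the desired quadratic majorant of $R$ with Hessian $\Vb^\top \Diag(\bb(\hb))\Vb$. Adding the three contributions yields expression \eqref{e:defQc}--\eqref{e:defA} for $\Theta_n(\hb',\hb)$, with first-order term $\nabla \mathrm{F}_n(\hb)^\top(\hb' - \hb)$ obtained by linearity of the gradient. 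The tangency equality $\mathrm{F}_n(\hv_n) = \Theta_n(\hv_n,\hv_n)$ is immediate by setting $\hb' = \hb$. I expect the only delicate point to be the justification that the chain-rule identification of the gradient of $\hb \mapsto \psi_s(\|\Vb_s\hb - \vb_s\|)$ remains valid at points where $\Vb_s\hb = \vb_s$, which is handled by the continuous extension assumption on $\nu_s$ at $0$.
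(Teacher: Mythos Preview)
Your proof is correct and follows essentially the same route as the paper: the paper simply states that the proposition ``straightforwardly results from'' inequality \eqref{e:ineqMMpsis}, and your argument fills in exactly those details---deriving \eqref{e:ineqMMpsis} (in its equivalent squared-argument form) from the concavity of $\psi_s(\sqrt{\cdot})$, applying it with $t=\|\Vb_s\hb-\vb_s\|$ and $t'=\|\Vb_s\hb'-\vb_s\|$, and recombining the block structure to recover $\Av_n(\hb)$. Your remark on the continuous extension of $\nu_s$ at $0$ to handle the gradient when $\Vb_s\hb=\vb_s$ is also appropriate.
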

If, we define, for every $n\in \eN \setminus \left\{0\right\}$, $\hv_{n+1}$ as the minimizer of $\Theta_n(\cdot,\hv_n)$, we obtain an online 
form of a half-quadratic algorithm \cite{Allain_M_2006_ieee-tip_On_galc}. Half-quadratic algorithms are known to be effective batch optimization methods, but the use of such method requires the inversion of matrix $\Av_n(\hv_n)$ at each iteration $n$,
which may be intractable in the context of large scale problems. Subsequently, we propose a subspace acceleration
strategy so as to reduce the computational cost of the proposed method.
}

\correcEC{
\subsection{Subspace acceleration strategy}
The main idea of subspace acceleration is to restrict the minimization space to a subspace spanned by a small
number of vectors, instead of minimizing the majorant over the whole space. The proposed MM subspace algorithm consists of defining the following sequence of random vectors $(\hv_n)_{n\ge 1}$:
\begin{equation}\label{e:MMmini}
(\forall \correcEC{n\in \eN \setminus \left\{0\right\}}) \qquad \hv_{n+1} \in \argmin_{\hb \in \ran\Dv_n} \Theta_n(\hb,\hv_n),
\end{equation}
where $\hv_1$ has to be set to an initial value, and \correcEC{$\ran\Dv_n$ denotes the range of a matrix $\Dv_n\in \eR^{N\times M_n}$ that} should satisfy the above assumption: 
\begin{assumption}
For every \correcEC{$n\in \eN \setminus \left\{0\right\}$, $\{\nabla \mathrm{F}_n(\hv_n),\hv_n\} \subset \ran \Dv_n$}.
\label{ass:subspace}
\end{assumption}
Several approaches can be considered to construct $\Dv_n$ fulfilling Assumption~\ref{ass:subspace}~\cite[Tab.I]{Chouzenoux2011a}. The simplest choice is to set $\Dv_n = [-\nabla \mathrm{F}_n(\hv_n),\hv_n]$, so that \eqref{e:MMmini} reads
\begin{equation}
\hv_{n+1} = \mathrm{u}_{n,2} \hv_n-\mathrm{u}_{n,1} \nabla \mathrm{F}_n(\hv_n),
\end{equation}
where $(\mathrm{u}_{n,1},\mathrm{u}_{n,2})$ is a pair of real-valued random variables. In the special case when $\mathrm{u}_{n,2}=1$, we recover the form of a SGD-like algorithm with step-size $\mathrm{u}_{n,1}$. In the machine learning literature, various forms of the step-size for SGD have been proposed \cite{Bach2011}, which often require to tune up some parameters (e.g. a multiplicative factor) so as to get the best convergence profile on the available dataset. On the contrary, the MM strategy allows us to automatically adjust $(\mathrm{u}_{n,1},\mathrm{u}_{n,2})$ at each iteration. Another possibility is to take, for every $n\in \eN \setminus \left\{0\right\}$, $\ran \Dv_n =\eR^N$. In that case, we recover the online half-quadratic method mentioned earlier, which may have a high computational cost. A more efficient strategy that is at the roots of many works in the context of batch optimization is to adopt an intermediate size subspace matrix, gathering the gradient subspace $[-\nabla \mathrm{F}_n(\hv_n),\hv_n]$ complemented with few vectors containing information regarding the previous iterates (e.g., previous gradient directions, previous iterates,...)~\cite{Bioucas2007,Beck09,Nesterov2007}. In particular, the memory gradient subspace~\cite{Miele69}, defined as:
% is obtained by choosing, for every \correcEC{$n\in \eN \setminus \left\{0\right\}$},
\begin{equation}\label{e:2memorysub}
\Dv_n = 
\begin{cases}
[-\nabla \mathrm{F}_n(\hv_n),\hv_n,\hv_n-\hv_{n-1}] & \mbox{if $n > 1$}\\
[-\nabla \mathrm{F}_n(\hv_1),\hv_1] & \mbox{if $n = 1$,}
\end{cases}
\end{equation}
was observed to lead to fast convergence on several examples in the field of signal and image restoration~\cite{Chouzenoux2013icip,Florescu2014}. 
%The combination of \eqref{e:2memorysub} and \eqref{e:MMmini} yields the so-called stochastic MM memory gradient (S3MG) algorithm, that can be viewed as an online version of a nonlinear conjugate gradient (NLCG) algorithm [ ] with closed form formula for the stepsize and conjugacy parameter, or of a low memory BFGS (L-BFGS) algorithm [ ] with a specific combination of memory directions.
} 

\subsection{Recursive MM strategy}\label{se:recur}

\correcEC{
We derive in this section a recursive form of the proposed stochastic MM subspace algorithm in \eqref{e:MMmini}, with the objective
to limit its complexity. First, note that, according to \eqref{e:defFn}, \eqref{e:defPsi}, and the definition of functions $(\nu_s)_{1\le s\le S}$ in Assumption~\ref{a:basic}\ref{a:basiciv}, for every \correcEC{$n\in \eN \setminus \left\{0\right\}$}, the gradient of $\mathrm{F}_n$ is given by
\begin{equation}\label{e:gradFn}
(\forall \hb \in \eR^N)\qquad \nabla \mathrm{F}_n(\hb) = \Av_n(\hb)\hb - \cv_n(\hb),
\end{equation}
where 
\begin{equation}
\cv_n(\hb) = \rv_n + \vb_0 + \Vb^\top\Diag\big(\bb(\hb)\big) \vb \in \eR^{N} \label{e:defcv}.
\end{equation}
Thus, using~\eqref{e:defQc}, we can rewrite~\eqref{e:MMmini} as
\begin{equation}\label{e:hnp1Dnun}
\hv_{n+1} = \Dv_n \uv_n,
\end{equation}
 where $\uv_n$ is an $\eR^{M_n}$-valued random vector such that:
%We
%deduce from \eqref{e:gradFn}, \eqref{e:defQc}, and \eqref{e:MMmini} that, for every \correcEC{$n\in \eN \setminus \left\{0\right\}$},
\begin{align}\label{e:defun}
\uv_{n} &= \Bv_n^{\dagger} \Dv_n^\top \big(\Av_n(\hv_n) \hv_n - \nabla \mathrm{F}_n(\hv_n)\big)\nonumber\\
&= \Bv_n^{\dagger} \Dv_n^\top \cv_n(\hv_n),
\end{align}
with
\begin{equation} \label{e:defBn}
\Bv_n = \Dv_n^\top \Av_n(\hv_n) \Dv_n
\end{equation}
and $(\cdot)^\dagger$ denoting the pseudo-inverse operation. It is important to note that, as $\Bv_n$ is of dimension $M_n\times M_n$ where $M_n$ is small
(typically $M_n=3$ for the choice of the subspace in \eqref{e:2memorysub} when $n>1$), this pseudo-inversion is light. This constitutes the key 
advantage of the proposed approach. 
}

\correcEC{
By using \eqref{e:defbarvartheta}, \eqref{e:defsampr} and \eqref{e:defsampR}, the following recursive updates of $(\rv_n)_{n\ge 1}$ and $(\Rv_n)_{n\ge 1}$,
can be performed
\begin{align}
(\forall n \in \eN\setminus \{0\}) \qquad  \rv_n &= \rv_{n-1}+ \frac{1}{\overline{\vartheta}_n} (\Xv_n \yv_n-\rv_{n-1})\\
\Rv_n &= \Rv_{n-1}+ \frac{1}{\overline{\vartheta}_n} (\Xv_n \Xv_n^\top-\Rv_{n-1}) \label{e:recurRvn},
\end{align}
where we have set $\rv_0 = \zerob$ and $\Rv_0 = \Ob_N$ and we have used the identity:
$\vartheta \overline{\vartheta}_{n-1}/\overline{\vartheta}_{n} = 1- \overline{\vartheta}_{n}^{-1}$. Then, it follows from \eqref{e:defA}, \eqref{e:defBn} and \eqref{e:recurRvn} that
\begin{equation}
\correcEC{(\forall n\in \eN \setminus \left\{0\right\})}  \; \Bv_n = \Dv_n^\top \big(\Dv^\Rv_n+\Dv^{\Vb_0}_n\big)
+\big(\Dv^{\Vb}_n\big)^\top\Diag\big(\bb(\hv_n)\big)\Dv^{\Vb}_n,
\end{equation}
where
\begin{align}
\correcEC{(\forall n\in \eN \setminus \left\{0\right\})} \qquad &\Dv^\Rv_n = \Rv_n \Dv_n \in \eR^{N\times M_n} \label{e:defDvRv}\\
& \Dv^{\Vb_0}_n = \Vb_0 \Dv_n \in \eR^{N\times M_n}\\
& \Dv^\Vb_n = \Vb \Dv_n \in \eR^{P\times M_n} \label{e:defDvVb}.
\end{align}
Finally, let us assume, without loss of generality, that the algorithm is initialized with $\hv_1 = \Dv_0 \uv_0$, where $\Dv_0\in \eR^{N\times M_0}$ and $\uv_0\in \eR^{M_0}$. Then, \eqref{e:gradFn} and \eqref{e:hnp1Dnun} yield
\begin{equation}
\correcEC{(\forall n\in \eN \setminus \left\{0\right\})}  \qquad \nabla\mathrm{F}_n(\hv_n) = \Dv^\Av_{n-1}\uv_{n-1}-\cv_n(\hv_n), 
\end{equation}
where we have set
\begin{equation}
(\forall n \in \eN) \qquad \Dv^\Av_n = \Av_{n+1}(\hv_{n+1})\,\Dv_n \in \eR^{N\times M_n}.
\end{equation}
By using \eqref{e:defA}, \eqref{e:recurRvn} and \eqref{e:defDvRv}-\eqref{e:defDvVb}, the latter variable can be reexpressed as
\begin{align}
\Dv^\Av_n = &\;\Rv_{n+1} \Dv_n + \Dv^{\Vb_0}_n + \Vb^\top \Diag\big(\bb(\hv_{n+1})\big) \Dv^{\Vb}_n\nonumber\\
= &\;(1-\frac{1}{\overline{\vartheta}_{n+1}}) \Dv^\Rv_n + \frac{1}{\overline{\vartheta}_{n+1}} \Xv_{n+1} (\Xv_{n+1}^\top \Dv_n)+ \Dv^{\Vb_0}_n\nonumber\\ 
& + \Vb^\top \Diag\big(\bb(\hv_{n+1})\big) \Dv^{\Vb}_n.
\end{align}
The resulting relations are summarized in Algorithm~\ref{e:stochMMalg}, which can be understood as a recursive implementation of Algorithm~\eqref{e:MMmini}.
}

\begin{algorithm}
\caption{Stochastic MM subspace method}\label{e:stochMMalg} 
\DontPrintSemicolon
$\rv_0 = \zerob, \Rv_0 =  \Ob_N$\;
Initialize $\Dv_0, \uv_0$\;
$\hv_1 = \Dv_0\uv_0, \Dv^\Rv_{0} = \Ob_{N\times M_n}, \Dv^{\Vb_0}_{0} = \Vb_0\Dv_0, \Dv^{\Vb}_{0} = \Vb\Dv_0$\;
\For{$n = 1,\ldots$}{
\nl \label{step1} $\rv_n = \rv_{n-1}+ \frac{1}{\overline{\vartheta}_n} (\Xv_n \yv_n-\rv_{n-1})$\; % (Q+1)N multiplications
\nl \label{step2} $\cv_n(\hv_n) = \rv_n + \vb_0 + \Vb^\top\Diag\big(\bb(\hv_n)\big) \vb$\; %N(P+1) multiplications
\nl \label{step3} $\Dv^\Av_{n-1} = (1-\frac{1}{\overline{\vartheta}_n}) \Dv^\Rv_{n-1} + \frac{1}{\overline{\vartheta}_n} \Xv_n (\Xv_n^\top \Dv_{n-1})$\;
$\qquad\quad\;\;\; + \Dv^{\Vb_0}_{n-1} + \Vb^\top \Diag\big(\bb(\hv_n)\big) \Dv^{\Vb}_{n-1}$\; %2QNM+PM(N+1) multiplications
\nl \label{step4} $\nabla\mathrm{F}_n(\hv_n) = \Dv^\Av_{n-1}\uv_{n-1}-\cv_n(\hv_n)$\; %NM multiplications
\nl \label{step5} $\Rv_n = \Rv_{n-1}+ \frac{1}{\overline{\vartheta}_n} (\Xv_n \Xv_n^\top-\Rv_{n-1})$\; %N(N+1)Q/2 multiplications
\nl Set $\Dv_n$ using $\nabla\mathrm{F}_n(\hv_n)$\;
\nl \label{step7} $\Dv^\Rv_n = \Rv_n \Dv_n, \Dv^{\Vb_0}_n = \Vb_0\Dv_n, \Dv^{\Vb}_n = \Vb\Dv_n$\; %NM(2N+P) multiplications
%\nl \label{step8} $\Bv_n = \Dv_n^\top \big(\Dv^\Rv_n+\Dv^{\Vb_0}_n+\Vb^\top\Diag\big(\bb(\hv_n)\big)\Dv^{\Vb}_n\big)$\;
% M(M+1)/2 N + PM(N+1)
\nl \label{step8} $\Bv_n = \Dv_n^\top \big(\Dv^\Rv_n+\Dv^{\Vb_0}_n\big)+\big(\Dv^{\Vb}_n\big)^\top\Diag\big(\bb(\hv_n)\big)\Dv^{\Vb}_n$\;
\nl \label{step9} $\uv_{n} = \Bv_n^{\dagger} \Dv_n^\top \big(\cv_n(\hv_n)\big)$\; %M^3+MN+M^2
\nl \label{step10} $\hv_{n+1} = \Dv_n \uv_{n}$\; %MN
}
\end{algorithm}

\subsection{Complexity}\label{se:complex}
\correcEC{Provided that the subspace dimensions $(M_n)_{n\in \eN}$ are small, Algorithm~\ref{e:stochMMalg} has a low complexity, as shown in Table \ref{t:complexity}.}

\begin{table}
\centering
\caption{Complexity in terms of multiplications for iteration $n$ of Algorithm~\ref{e:stochMMalg}.}
\label{t:complexity}
\begin{tabular}{|c|c|c|}
\hline
Step & Complexity &  Complexity\\
& for $\Vb\in \eR^{P\times N}$ arbitrary & when $\Vb = \Ib_N$\\
\hline
\hline
\ref{step1} & \multicolumn{2}{c|}{$N(Q+1)$}\\
\hline
\ref{step2} & $(N+1)P$ & $N$\\ %important
\hline
\ref{step3} & $M_{n-1}\big(N(2Q+P+1)+P+Q\big)$ & $M_{n-1}\big(N(2Q+1)+Q\big)$\\ %important
\hline
\ref{step4} & \multicolumn{2}{c|}{$N M_{n-1}$}\\
\hline
\ref{step5} & \multicolumn{2}{c|}{$N(N+1)Q/2$}\\ %important
\hline
\ref{step7} & $NM_n(2N+P)$ & $2N^2M_n$\\ %important
\hline
\ref{step8} & $M_n\big((M_n+1)(N+P)/2+P\big)$ & $N M_n (M_n+3)/2$\\
\hline
\ref{step9} & \multicolumn{2}{c|}{$O(M_n^3)+M_n(N+M_n)$}\\
\hline
\ref{step10} & \multicolumn{2}{c|}{$N M_n$}\\
\hline
\end{tabular}
\end{table}
Indeed, the global complexity of a direct implementation of Algorithm~\ref{e:stochMMalg}, evaluated in terms of multiplications
at iteration $n$, is of the order of  
\[
N\big(P(M_n+M_{n-1}+1)+N(4M_n+Q)/2\big),
\]
if we assume that $N\gg \max\{M_n,M_{n-1},Q\}$.
The first term $N P(M_n+M_{n-1}+1)$ corresponds to an upper bound on the complexity induced by the use of matrices $(\Vb_s)_{1\le s \le S}$
within the regularization term.  \correcEC{Note that these} matrices often have a sparse structure (in particular when discrete derivative operators are employed) which \correcEC{may lead} to a much lower computational cost. \correcEC{Moreover, when} $\Vb=\Ib_N$, the identity matrix of $\eR^N$, which is a scenario frequently encountered in adaptive filtering, this term merely vanishes in the evaluation of the global complexity.

The computational complexity can also be reduced by taking advantage of the specific form of matrices $(\Dv_n)_{n\ge 1}$. 
\correcEC{Here, we focus our analysis on the example of the memory gradient subspace defined in~\eqref{e:2memorysub} \correcEC{although it should be noticed that the  ideas hereinbelow could be easily generalized to a wide class of subspaces where matrices $(\Dv_n)_{n\ge 1}$ represent memory features~(e.g. \cite[Tab. II]{Chouzenoux2011a}.)}. For the particular case of subspace~\eqref{e:2memorysub}, we have:}
\begin{equation}\label{e:recursDVn}
(\forall n > 1)\quad \Dv^\Vb_n = [-\Vb\nabla\mathrm{F}_n(\hv_n),\Vb\hv_n,\Vb\hv_n-\Vb\hv_{n-1}].
\end{equation}
\correcEC{Since, for every $n\ge 1$,}
\begin{equation}
\Vb\hv_n = \Vb \Dv_{n-1}\uv_{n-1} = \Dv^\Vb_{n-1} \uv_{n-1},
\end{equation}
a recursive formula holds to compute the last two components of $\Dv^\Vb_n$ in \eqref{e:recursDVn}. 
The initial complexity of $3NP$ multiplications is thus reduced to $N(P+3)$.
Similar recursive procedures can be employed to compute $(\Dv^{\Vb_0}_n)_{n>1}$
allowing the complexity to be reduced to $N(N+3)$ from $3N^2$.
%and $(\Dv^{\Rv_n}_n)_{n>1}$.
In addition, we have, for every $n > 1$,
\begin{equation}\label{e:recursDRn}
\Dv^\Rv_n = [-\Rv_n\nabla\mathrm{F}_n(\hv_n),\hv^\Rv_n,\hv^\Rv_n-\Rv_n\hv_{n-1}],
\end{equation}
where, by using \eqref{e:recurRvn},
\begin{align}
\hv^\Rv_n =
\Rv_n\hv_n &= (1-\frac{1}{\overline{\vartheta}_n}) \Rv_{n-1}\hv_n + \frac{1}{\overline{\vartheta}_n} \Xv_n\Xv_n^\top \hv_n\nonumber\\
& = (1-\frac{1}{\overline{\vartheta}_n}) \Dv^\Rv_{n-1}\uv_{n-1} + \frac{1}{\overline{\vartheta}_n} \Xv_n\Xv_n^\top \hv_n\\
\Rv_{n}\hv_{n-1} &= (1-\frac{1}{\overline{\vartheta}_n})\hv^\Rv_{n-1} + \frac{1}{\overline{\vartheta}_n} \Xv_n\Xv_n^\top \hv_{n-1}.
\end{align}
It can be further observed that last term $(\overline{\vartheta}_n)^{-1}\Xv_n\Xv_n^\top \hv_{n-1}$
has already been computed in Step \ref{step3} of Algorithm~\ref{e:stochMMalg}. Therefore,
instead of $3N^2$ multiplications, we have now to perform $N(N+2Q+4)$ ones. 
With these simplifications, in the case when $\Vb_0$ and $\Vb$ are null matrices,
the global complexity of the algorithm is equal to $N^2(Q+2)/2$. When $Q=1$, we thus recover the
order of complexity of the classical RLS algorithm. Since the objective function then reduces to a quadratic function, 
Sherman-Morrison-Woodbury formula can be invoked to compute iteratively the minimizer on the whole space in an efficient manner.

Note finally that the computation of $\Xv_n \Xv_n^\top$ with $\correcEC{n\in \eN \setminus \left\{0\right\}} $, which needs to be performed
in Step \ref{step5}, remains a main source of complexity. However, if $(\forall n > Q)$
$\Xv_n = [\xv_{n-Q+1},\ldots,\xv_n]$ where $\xv_n\in \eR^N$ (as it is the case in affine projection based algorithms for adaptive processing \cite{Gay_S-L_1995_picassp_fast_apa}),
then a recursive computation of $\Xv_n \Xv_n^\top$ only requires $\xv_n \xv_n^\top$ to be computed at each iteration $n>Q$.
If we further assume that the model is a one-dimensional convolutive
one, i.e. $\xv_n$ corresponds to shifted samples of a signal $\big(x(n)\big)_{n\ge 1}$, then $(\forall n > N)$ $\xv_n = [x(n-N+1),\ldots,x(n)]^\top$
and $\xv_n \xv_n^\top$  can be itself computed recursively with a complexity of $N$ operations. Such ideas have been deeply investigated in the literature on fast RLS algorithms
\cite{Manolakis_D_2005_book_sasp}.

\section{Convergence study}
\label{sec:convergence}

\correcEC{Establishing the convergence of stochastic approximation algorithms is challenging \cite{Robbins1951,Kushner03,Frikha12,Fathi13,Combettes_P_2016_pafa_2016_Stochastic_apfbsmo}.}
Throughout this section and the related appendices, it is assumed that $\vartheta = 1$. 
The underlying probability space being denoted by $(\Omega,\FF,\PP)$, 
we will say in short that a property is $\as$ satisfied if this property holds almost surely.
\subsection{Assumptions}
For every \correcEC{$n\in \eN \setminus \left\{0\right\}$} , let $\XX_n = \sigma\big((\Xv_k,\yv_k)_{1\le k \le n}\big)$
be the sub-sigma algebra of $\FF$ generated by $(\Xv_k,\yv_k)_{1\le k \le n}$.
In order to give a proof of convergence of the proposed stochastic MM subspace algorithm,
we will make the following additional assumption:
\begin{assumption}\label{a:conv}\
\begin{enumerate}
\item \label{a:basici} $\Rb+\Vb_0$ is a positive definite matrix.
%\item \label{a:convi} $\big((\Xv_n,\yv_n)\big)_{n\ge 1}$ is an independent identically distributed (i.i.d.) sequence and, for every $n\in \eN^*$,
%the elements of $\Xv_n$ and the components of $\yv_n$ have finite fourth-order moments.
\item \label{a:convi} $\big((\Xv_n,\yv_n)\big)_{n\ge 1}$ is a stationary ergodic sequence and, for every \correcEC{$n\in \eN \setminus \left\{0\right\}$},
the elements of $\Xv_n$ and the components of $\yv_n$ have finite fourth-order moments.
\item \label{a:convibis} For every \correcEC{$n\in \eN \setminus \left\{0\right\}$},
\begin{align}
& \EC{\|\yv_{n+1}\|^2}{\XX_n} = \varrho\\
& \EC{\Xv_{n+1} \yv_{n+1}}{\XX_n} = \rb\\
& \EC{\Xv_{n+1}\Xv_{n+1}^\top}{\XX_n} = \Rb.
\end{align}
%\item \label{a:convii} For every \correcEC{$n\in \eN / \left\{0\right\}$} , $\{\nabla \mathrm{F}_n(\hv_n),\hv_n\} \subset \Span \Dv_n$.
\item \label{a:conviii} $\hv_1$ is $\XX_1$-measurable and, for every \correcEC{$n\in \eN \setminus \left\{0\right\}$} , $\Dv_n$ is $\XX_n$-measurable.
\end{enumerate}
\end{assumption}
The following asymptotic results will then be useful in the rest of our developments.
 \begin{lemma}\label{a:asympt}
Under Assumptions \ref{a:conv}\ref{a:convi} and \ref{a:conv}\ref{a:convibis}, the following properties hold:
\begin{enumerate}%\label{a:asympt}
\item \label{a:asympti} $(\rho_n)_{n\ge 1}$, $(\Rv_n)_{n\ge 1}$, and $(\rv_n)_{n\ge 1}$ converge $\as$ to 
$\varrho$, $\Rb$ and $\rb$, respectively
\item \label{a:asymptii} $\displaystyle\sum_{n=1}^{+\infty} n^{-1} |\rho_n -\varrho| < +\infty$ $\;\;\;\as$\\
$\displaystyle\sum_{n=1}^{+\infty} n^{-1} \|\rv_n -\rb\| < +\infty$ $\;\;\as$\\
$\displaystyle\sum_{n=1}^{+\infty} n^{-1} |||\Rv_n -\Rb ||| < +\infty$ $\as$,
\end{enumerate}
where $|||\cdot|||$ denotes the spectral matrix norm.
\end{lemma}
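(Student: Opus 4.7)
The plan is to establish item~\ref{a:asympti} via Birkhoff's pointwise ergodic theorem and item~\ref{a:asymptii} by exploiting the martingale difference structure induced by Assumption~\ref{a:conv}\ref{a:convibis} together with an elementary $L^2$ estimate.

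For \ref{a:asympti}, the first step is to observe that, since $\vartheta = 1$, expressions \eqref{e:defsamprho}--\eqref{e:defsampR} reduce $\rho_n$, $\rv_n$, and $\Rv_n$ to plain Ces\`aro averages of the stationary ergodic sequences $(\|\yv_k\|^2)_{k\ge 1}$, $(\Xv_k\yv_k)_{k\ge 1}$, and $(\Xv_k\Xv_k^\top)_{k\ge 1}$. Each of these sequences is integrable: indeed, Cauchy-Schwarz applied entry by entry, combined with the fourth-order moment assumption on the entries of $\Xv_k$ and the components of $\yv_k$ (Assumption~\ref{a:conv}\ref{a:convi}), even places them in $L^2$. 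Birkhoff's theorem, applied componentwise and entrywise, then delivers a.s.\ convergence, and the limits are identified as $\varrho$, $\rb$, and $\Rb$ by the stationarity (which forces the unconditional expectation of every summand to coincide with these values).

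For \ref{a:asymptii}, the crucial point is that the sequences
\[
Z_k^{(\varrho)} = \|\yv_k\|^2 - \varrho, \quad Z_k^{(\rb)} = \Xv_k\yv_k - \rb, \quad Z_k^{(\Rb)} = \Xv_k\Xv_k^\top - \Rb,
\]
are, by Assumption~\ref{a:conv}\ref{a:convibis}, martingale differences with respect to the filtration $(\XX_k)_{k\ge 1}$ (componentwise for vectors, entrywise for matrices). Setting $S_n^{(\varrho)} = \sum_{k=1}^n Z_k^{(\varrho)}$, orthogonality of martingale increments and stationarity give $\EE\big((S_n^{(\varrho)})^2\big) = n\,\EE\big((Z_1^{(\varrho)})^2\big) = O(n)$, the right-hand side being finite since the fourth moments of the components of $\yv_1$ are. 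Jensen's inequality yields $\EE(|S_n^{(\varrho)}|) = O(\sqrt{n})$, and from $n^{-1}|\rho_n-\varrho| = n^{-2}|S_n^{(\varrho)}|$ combined with Tonelli's theorem one obtains
\[
\EE\Big(\sum_{n=1}^{+\infty} n^{-1}|\rho_n-\varrho|\Big) = \sum_{n=1}^{+\infty} n^{-2}\,\EE(|S_n^{(\varrho)}|) = O\Big(\sum_{n=1}^{+\infty} n^{-3/2}\Big) < +\infty,
\]
so the series is a.s.\ finite. The same computation, carried out componentwise for $\rv_n-\rb$ (bounding $\EE(\|\rv_n-\rb\|)$ by the square root of the sum over components of the componentwise second moments) and entrywise for $\Rv_n-\Rb$ (dominating the spectral norm by the Frobenius norm), yields the two remaining bounds.

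The main obstacle I anticipate is essentially bookkeeping: making sure that the fourth-moment hypothesis on the scalar entries really delivers a uniform-in-$k$ bound on the second moments of every scalar martingale increment arising from the vector/matrix decomposition, and checking that the norms appearing in the statement (Euclidean for $\rv_n$, spectral for $\Rv_n$) are compatible with those entrywise $L^2$ bounds via Cauchy-Schwarz and the Frobenius-spectral inequality. Beyond this, the argument reduces to a standard combination of Birkhoff's ergodic theorem, an $L^2$-martingale estimate, and Fubini-Tonelli.
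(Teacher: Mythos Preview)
Your proof of \ref{a:asympti} is the same as the paper's: both invoke the pointwise ergodic theorem applied to the (integrable) stationary ergodic sequences $(\|\yv_k\|^2)_{k\ge 1}$, $(\Xv_k\yv_k)_{k\ge 1}$, and $(\Xv_k\Xv_k^\top)_{k\ge 1}$.

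For \ref{a:asymptii}, your approach is correct but genuinely different from the paper's. The paper also exploits the martingale-difference structure given by Assumption~\ref{a:conv}\ref{a:convibis}, but instead of your $L^2$ estimate it appeals to the law of the iterated logarithm for martingale difference sequences to obtain the almost sure bound
\[
\limsup_{n\to+\infty}\frac{n^{1/2}\,|\rho_n-\varrho|}{(\log\log n)^{1/2}}<+\infty,
\]
and then concludes by noting that $\sum_{n\ge 2} n^{-3/2}(\log\log n)^{1/2}<+\infty$; the arguments for $\rv_n$ and $\Rv_n$ are identical. Your route is more elementary: orthogonality of martingale increments and stationarity give $\EE\big((S_n^{(\varrho)})^2\big)=O(n)$ directly from the fourth-moment hypothesis, and the Tonelli step $\sum_n n^{-2}\,\EE|S_n^{(\varrho)}|=O(\sum_n n^{-3/2})$ avoids any pathwise rate result. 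The paper's use of the LIL yields a sharper almost sure growth rate for $|S_n^{(\varrho)}|$, but that extra precision is not needed for the lemma, so your argument is a legitimate simplification. The bookkeeping you flag (passing to component/entrywise increments, then recombining via the Euclidean and Frobenius--spectral inequalities) is routine and goes through exactly as you describe.
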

\begin{proof} See Appendix \ref{ap:asympt}.
\end{proof}

\begin{remark}\
\begin{enumerate}
 \item Assumptions \ref{a:conv}\ref{a:convi} and \ref{a:conv}\ref{a:convibis} are more general than assuming
 that $\big((\Xv_n,\yv_n)\big)_{n\ge 1}$ is an independent identically distributed (i.i.d.) sequence and, for every \correcEC{$n\in \eN \setminus \left\{0\right\}$} ,
the elements of $\Xv_n$ and the components of $\yv_n$ have finite fourth-order moments.
 \item \correcEC{Assumption \ref{a:conv}\ref{a:conviii} is satisfied as soon as $\hv_1$ is $\XX_1$-measurable (e.g. $\hv_1$ is deterministic)  and the subspace directions, i.e., the columns of $\Dv_n$, only depend on $\big((\Xv_k,\yv_k,\hv_k)\big)_{1 \leq k \leq n}$. This is actually the case for the various subspace constructions listed in~\cite[Tab. I]{Chouzenoux2011a}, and, in particular, for the memory gradient subspace given by~\eqref{e:2memorysub}.}
\end{enumerate}
\end{remark}

\subsection{Almost sure convergence}

Let us give the following preliminary property:
\begin{lemma}\label{le:hbounded}
Under Assumptions \ref{a:basic}, \correcEC{\ref{ass:subspace} and} \ref{a:conv}\ref{a:convi}-\ref{a:conv}\ref{a:convibis},  $(\hv_n)_{n\ge 1}$ is $\as$ bounded.\footnote{We say that a sequence of random vectors is almost surely bounded when the norms of all these vectors can be bounded by some random variable with probability 1.}
\end{lemma}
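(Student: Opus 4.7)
\emph{Plan.} The proof would combine (i) the MM descent from the majorization property \eqref{e:MMmaj}, (ii) the eventual uniform coercivity of $\mathrm{F}_n$, and (iii) an almost-supermartingale (Robbins--Siegmund) argument applied to $(\mathrm{F}_n(\hv_n))_{n\ge 1}$.

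First, Assumption \ref{ass:subspace} ensures that $\hv_n \in \ran \Dv_n$, so the minimality of $\hv_{n+1}$ in \eqref{e:MMmini} together with \eqref{e:MMmaj} yields the MM descent
\[
\mathrm{F}_n(\hv_{n+1}) \le \Theta_n(\hv_{n+1},\hv_n) \le \Theta_n(\hv_n,\hv_n) = \mathrm{F}_n(\hv_n).
\]
Set $\lambda_0 = \lambda_{\min}(\Rb+\Vb_0)>0$ (Assumption \ref{a:conv}\ref{a:basici}). By Lemma \ref{a:asympt}\ref{a:asympti}, there exists an $\as$ finite random index $N_0$ such that the smallest eigenvalue of $\Rv_n+\Vb_0$ is at least $\lambda_0/2$ for every $n \ge N_0$. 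Using the lower bound $\Psi_{\min} := \sum_s \inf \psi_s$ (Assumption \ref{a:basic}\ref{a:basicii}) and Young's inequality to absorb the linear term, this yields the uniform coercivity estimate
\[
\mathrm{F}_n(\hb) \ge (\lambda_0/8)\|\hb\|^2 - K_n, \qquad n \ge N_0,
\]
where $K_n$ is $\XX_n$-measurable and $\as$ bounded (being continuous in the $\as$ convergent variables $\rho_n$ and $\rv_n$). Combining with the MM descent gives
\[
\|\hv_{n+1}\|^2 \le (8/\lambda_0)\bigl(\mathrm{F}_n(\hv_n) + K_n\bigr) \qquad (n \ge N_0),
\]
so the claim reduces to the $\as$ boundedness of $(\mathrm{F}_n(\hv_n))_{n\ge 1}$.

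Next I would set up an almost-supermartingale inequality. Note that $\hv_{n+1}$ is $\XX_n$-measurable thanks to Assumption \ref{a:conv}\ref{a:conviii} and the form of the recursion (it is a measurable function of $\hv_n$, $\Dv_n$, $\rv_n$, $\Rv_n$). Taking the conditional expectation of \eqref{e:defFn} at $\hv_{n+1}$ under Assumption \ref{a:conv}\ref{a:convibis}, together with the one-step updates of $\rho_n,\rv_n,\Rv_n$, yields
\begin{align*}
\EC{\mathrm{F}_{n+1}(\hv_{n+1})}{\XX_n} =\;& \mathrm{F}_n(\hv_{n+1}) + \frac{\varrho-\rho_n}{2(n+1)} \\
& - \frac{(\rb-\rv_n)^\top\hv_{n+1}}{n+1} + \frac{\hv_{n+1}^\top(\Rb-\Rv_n)\hv_{n+1}}{2(n+1)}.
\end{align*}
Injecting the MM descent, the quadratic bound on $\|\hv_{n+1}\|^2$, and $\|\hv_{n+1}\| \le 1 + \|\hv_{n+1}\|^2$ produces
\[
\EC{\mathrm{F}_{n+1}(\hv_{n+1})}{\XX_n} \le (1+\mu_n)\mathrm{F}_n(\hv_n) + \nu_n,
\]
in which $(\mu_n)$ and $(\nu_n)$ are nonnegative, $\XX_n$-measurable, and $\as$ summable thanks to Lemma \ref{a:asympt}\ref{a:asymptii} (the dominant summands being $|\varrho-\rho_n|/n$, $\|\rb-\rv_n\|/n$, and $|||\Rb-\Rv_n|||/n$, each summable, each multiplied at most by the $\as$ bounded sequence $K_n$).

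Finally, a Robbins--Siegmund almost-supermartingale convergence theorem applied to a suitable nonnegative shift of $\mathrm{F}_n(\hv_n)$ would deliver $\as$ convergence of $(\mathrm{F}_n(\hv_n))_{n\ge 1}$, hence its $\as$ boundedness; the coercivity estimate above then transfers boundedness to $(\hv_n)_{n\ge 1}$. The main technical obstacle is precisely this nonnegativity requirement: $\mathrm{F}_n(\hv_n)$ admits only a random lower envelope (involving $\rv_n$ and $\rho_n$), so no deterministic constant $M$ can be guaranteed a priori to make $\mathrm{F}_n(\hv_n)+M$ nonnegative on all of $\Omega$. The standard workaround is a localization argument: restrict to the event $\{N_0 \le n_0\}$ for a fixed deterministic $n_0$, on which $K_n$ is dominated by a deterministic constant; apply Robbins--Siegmund on this event; then let $n_0 \to \infty$ to exhaust $\Omega$.
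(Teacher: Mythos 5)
Your route is workable in outline but it is not the paper's, and as written it has a genuine gap. The paper's proof is a short pathwise coercivity argument that needs none of your martingale machinery: since $\zerob\in\ran\Dv_n$, the minimizer $\hv_{n+1}$ of the surrogate over $\ran\Dv_n$ satisfies $\widetilde{\Theta}_n(\hv_{n+1},\hv_n)\le\widetilde{\Theta}_n(\zerob,\hv_n)=0$, where $\widetilde{\Theta}_n(\hb,\hv_n)=\frac12\hb^\top\Av_n(\hv_n)\hb-\cv_n(\hv_n)^\top\hb$ equals $\Theta_n(\cdot,\hv_n)$ up to an additive constant. Because $\bb(\cdot)$ is bounded (Assumption \ref{a:basic}\ref{a:basiciv}) and $\rv_n\to\rb$, $\Rv_n\to\Rb$ almost surely (Lemma \ref{a:asympt}\ref{a:asympti}), the vectors $\cv_n(\hv_n)$ are almost surely bounded and $\Av_n(\hv_n)\succeq\Rb-\epsilon\Ib_N+\Vb_0\succ\Ob_N$ for all $n$ large enough; hence the sublevel set $\{\widetilde{\Theta}_n(\cdot,\hv_n)\le 0\}$ is eventually contained in a fixed ball, which gives $\|\hv_{n+1}\|\le\zeta$ directly. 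This comparison with the candidate point $\zerob$ is the missing idea; it avoids any conditioning and, in particular, avoids Assumption \ref{a:conv}\ref{a:conviii}, which is not among the lemma's hypotheses but which your argument must invoke to make $\hv_{n+1}$ $\XX_n$-measurable and compute $\EC{\mathrm{F}_{n+1}(\hv_{n+1})}{\XX_n}$.

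Within your own route, the decisive weakness is the localization step. Your almost-supermartingale inequality is only established from the random index $N_0$ onward, and $N_0$ (defined through a condition on the whole tail of $(\Rv_n)_{n\ge 1}$) is not a stopping time, so $\{N_0\le n_0\}$ is not $\XX_{n}$-measurable; you cannot multiply the inequality by its indicator and retain the Robbins--Siegmund structure. Moreover, contrary to what you state, this event does not dominate $K_n$ by a deterministic constant: $K_n$ is controlled by $\sup_n\|\rv_n\|$ and $\sup_n|\rho_n|$, which are unrelated to $N_0$ (and Robbins--Siegmund only needs almost sure summability anyway). A correct repair would work with the $\XX_n$-measurable events $B_n=\{\Rv_n+\Vb_0\succeq(\lambda_0/2)\Ib_N\}$, which almost surely hold for all but finitely many $n$, and a stopping argument handling the exceptional indices; that is precisely the delicate part and it is absent. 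Note also that the nonnegativity issue you single out is not the real obstacle: $\mathrm{F}_n\ge\inf\Psi$ deterministically because the data-fidelity term is nonnegative, which is exactly how the paper handles it in the proof of Lemma \ref{le:summahvn} --- where your conditional-expectation computation does appear, but can be run safely because the boundedness asserted by the present lemma is already available there.
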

\begin{proof}
See Appendix \ref{ap:hbounded}.
\end{proof}
\medskip
\noindent Combining the previous lemma with classical results on the asymptotic behaviour of almost supermartingales, the convergence of the sequence
$\big(\mathrm{F}_n(\hv_n)\big)_{n\ge 1}$ can be established:
\begin{lemma}\label{le:summahvn}
Under Assumptions \ref{a:basic}-\ref{a:conv},  $\big(\mathrm{F}_n(\hv_n)\big)_{n\ge 1}$ is $\as$ convergent
and $\big((\hv_{n+1}-\hv_n)^\top \Av_n(\hv_n) (\hv_{n+1}-\hv_n)\big)_{n\ge 1}$
is $\as$ summable.
\end{lemma}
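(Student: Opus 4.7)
The plan is to combine the Majorize–Minimize descent inequality with a stochastic analysis of the fluctuation induced by replacing $\mathrm{F}_n$ by $\mathrm{F}_{n+1}$, and then invoke the Robbins–Siegmund almost-supermartingale convergence theorem.

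\textbf{Step 1 (pathwise MM descent).} First I would prove
$$\mathrm{F}_n(\hv_{n+1}) \le \mathrm{F}_n(\hv_n) - \frac{1}{2}(\hv_{n+1}-\hv_n)^\top \Av_n(\hv_n)(\hv_{n+1}-\hv_n).$$
By Assumption~\ref{ass:subspace}, $\hv_n\in \ran\Dv_n$, so $\hv_{n+1}-\hv_n\in \ran\Dv_n$. First-order optimality of $\hv_{n+1}$ as minimizer of $\Theta_n(\cdot,\hv_n)$ on $\ran\Dv_n$ yields $\Dv_n^\top\bigl(\nabla\mathrm{F}_n(\hv_n)+\Av_n(\hv_n)(\hv_{n+1}-\hv_n)\bigr)=0$, hence $\nabla \mathrm{F}_n(\hv_n)^\top (\hv_{n+1}-\hv_n) = -(\hv_{n+1}-\hv_n)^\top \Av_n(\hv_n)(\hv_{n+1}-\hv_n)$. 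Injecting this into the expression~\eqref{e:defQc} of $\Theta_n(\hv_{n+1},\hv_n)$ and using the majorization $\mathrm{F}_n(\hv_{n+1})\le \Theta_n(\hv_{n+1},\hv_n)$ from Proposition~\ref{p:majquad} gives the claim. Positive semi-definiteness of $\Av_n(\hv_n)$ (since $\Rv_n,\Vb_0,\Diag(\bb(\hv_n))$ are all positive semi-definite by Assumption~\ref{a:basic}\ref{a:basiciv}) makes the quadratic term non-negative.

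\textbf{Step 2 (stochastic fluctuation).} Writing $\Delta_n \coloneqq \mathrm{F}_{n+1}(\hv_{n+1})-\mathrm{F}_n(\hv_{n+1})$ and using $\overline{\vartheta}_n=n$ together with the recursions $\rho_{n+1}=\rho_n+\tfrac{1}{n+1}(\|\yv_{n+1}\|^2-\rho_n)$, $\rv_{n+1}=\rv_n+\tfrac{1}{n+1}(\Xv_{n+1}\yv_{n+1}-\rv_n)$, and $\Rv_{n+1}=\Rv_n+\tfrac{1}{n+1}(\Xv_{n+1}\Xv_{n+1}^\top-\Rv_n)$, we have
$$\Delta_n = \frac{\|\yv_{n+1}\|^2-\rho_n}{2(n+1)} - \frac{(\Xv_{n+1}\yv_{n+1}-\rv_n)^\top \hv_{n+1}}{n+1} + \frac{\hv_{n+1}^\top(\Xv_{n+1}\Xv_{n+1}^\top-\Rv_n)\hv_{n+1}}{2(n+1)}.$$
A straightforward induction using Assumption~\ref{a:conv}\ref{a:conviii} shows that $\hv_{n+1}$ is $\XX_n$-measurable: $\hv_1$ is $\XX_1$-measurable by assumption, and $\hv_{n+1}=\Dv_n\uv_n$ where $\Dv_n$ is $\XX_n$-measurable while $\uv_n$ depends only on $\Dv_n$, $\Rv_n$, $\rv_n$, $\vb_0$, $\Vb$, $\vb$ and $\hv_n$. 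Conditioning on $\XX_n$ and applying Assumption~\ref{a:conv}\ref{a:convibis} therefore gives
$$\EC{\Delta_n}{\XX_n} = \frac{\varrho-\rho_n}{2(n+1)} - \frac{(\rb-\rv_n)^\top \hv_{n+1}}{n+1} + \frac{\hv_{n+1}^\top(\Rb-\Rv_n)\hv_{n+1}}{2(n+1)}.$$
Combining the a.s. boundedness of $(\hv_n)$ from Lemma~\ref{le:hbounded} with Lemma~\ref{a:asympt}\ref{a:asymptii} then yields $\sum_{n\ge 1}|\EC{\Delta_n}{\XX_n}|<\infty$ $\as$

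\textbf{Step 3 (almost-supermartingale argument).} Adding the identity $\mathrm{F}_{n+1}(\hv_{n+1})=\mathrm{F}_n(\hv_{n+1})+\Delta_n$ to Step~1 and taking conditional expectation given $\XX_n$ (note $\hv_{n+1}$ and $\Av_n(\hv_n)$ are $\XX_n$-measurable) gives
$$\EC{\mathrm{F}_{n+1}(\hv_{n+1})}{\XX_n}\le \mathrm{F}_n(\hv_n) - \frac12(\hv_{n+1}-\hv_n)^\top \Av_n(\hv_n)(\hv_{n+1}-\hv_n) + \EC{\Delta_n}{\XX_n}.$$
Since $\Psi$ is bounded below on bounded sets (Assumption~\ref{a:basic}\ref{a:basicii}), $(\hv_n)$ is a.s. bounded, and $(\rho_n,\rv_n,\Rv_n)$ are a.s. bounded (they converge by Lemma~\ref{a:asympt}\ref{a:asympti}), there is an a.s. finite random variable $K$ such that $V_n\coloneqq \mathrm{F}_n(\hv_n)+K \ge 0$ for all $n$ a.s. Setting $U_n=\tfrac12(\hv_{n+1}-\hv_n)^\top\Av_n(\hv_n)(\hv_{n+1}-\hv_n)\ge 0$ and $b_n=|\EC{\Delta_n}{\XX_n}|$, we arrive at
$$\EC{V_{n+1}}{\XX_n}\le V_n - U_n + b_n,$$
with $\sum_n b_n<\infty$ $\as$ The Robbins–Siegmund theorem then delivers simultaneously the a.s. convergence of $V_n$ (hence of $\mathrm{F}_n(\hv_n)$) and the a.s. summability of $(U_n)$, which is the announced conclusion.

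\textbf{Main obstacle.} The delicate point is accommodating the fact that the lower bound on $\mathrm{F}_n(\hv_n)$ is only a.s. finite (random), not a deterministic constant. I would handle this by a localization argument: for each integer $k$, restrict to the event $\Omega_k=\{\sup_n\|\hv_n\|\le k\}$, on which $V_n$ can be shifted by a deterministic constant $K_k$ so that Robbins–Siegmund applies pathwise; the conclusion on $\bigcup_k\Omega_k$, which has full probability by Lemma~\ref{le:hbounded}, yields the lemma.
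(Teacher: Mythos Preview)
Your argument is correct and follows essentially the same route as the paper: the MM descent inequality of Step~1, the explicit computation of $\EC{\Delta_n}{\XX_n}$ via Assumption~\ref{a:conv}\ref{a:convibis} in Step~2, the summability via Lemma~\ref{a:asympt}\ref{a:asymptii} and Lemma~\ref{le:hbounded}, and the Robbins--Siegmund conclusion all match the paper's proof.

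The only noteworthy difference is your handling of the lower bound in Step~3. You shift $\mathrm{F}_n(\hv_n)$ by a \emph{random} constant $K$ (coming from the a.s.\ boundedness of $(\hv_n)$ and of the sample statistics), then repair the resulting adaptedness issue by localizing on $\Omega_k=\{\sup_n\|\hv_n\|\le k\}$. This works, but the paper avoids the detour entirely: since the empirical least-squares term $\tfrac{1}{2n}\sum_{k=1}^n\|\yv_k-\Xv_k^\top\hb\|^2$ is nonnegative, one has $\mathrm{F}_n(\hb)\ge \Psi(\hb)\ge \inf\Psi$ for every $\hb$ and every $n$, so $\mathrm{F}_n(\hv_n)-\inf\Psi$ is already a nonnegative adapted sequence and Robbins--Siegmund applies directly, with no localization needed. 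Your route is more robust (it would survive, e.g., a data-fidelity term that is not manifestly nonnegative), while the paper's is shorter.
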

\begin{proof} See Appendix \ref{ap:summahvn}.
\end{proof}
\medskip
\noindent  Lemma \ref{le:summahvn} allows us to deduce the following 
result on the sequence of gradients computed at each iteration of the algorithm:
\begin{lemma}\label{le:summagrad}
Under Assumptions \ref{a:basic}-\ref{a:conv}, 
$(\|\nabla \mathrm{F}_n(\hv_n)\|)_{n\ge 1}$ is $\as$ square-summable.
%there exists a sequence of positive valued random variables $(\mathrm{a}_n)_{n\ge 1}$
%such that,
%for every $n\in \eN^*$, $\mathrm{a}_n$ is $\XX_n$-measurable and
%\begin{equation}\label{e:difFngradFnan}
 %%(\forall n\in \eN^*)\qquad 
 %\mathrm{a}_n \|\nabla \mathrm{F}_n(\hv_n)\|^2 \le \mathrm{F}_n(\hv_n)-\mathrm{F}_n(\hv_{n+1}).
%\end{equation}
%In addition, there exists $\alpha\in ]0,+\infty[$ such that 
%\begin{equation}
%\liminf_{n\to \infty} \mathrm{a}_n \ge \alpha \qquad \as
%\end{equation}
\end{lemma}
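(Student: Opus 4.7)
The plan is to link $\|\nabla \mathrm{F}_n(\hv_n)\|^2$ to the summable quantity of Lemma~\ref{le:summahvn} via a Cauchy--Schwarz argument in the semi-inner product induced by $\Av_n(\hv_n)$, exploiting crucially the fact that $\nabla \mathrm{F}_n(\hv_n)$ lies in the search subspace $\ran \Dv_n$.

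First I would write the first-order optimality condition for $\hv_{n+1}$ as a minimizer of the quadratic function $\Theta_n(\cdot,\hv_n)$ over $\ran\Dv_n$. Since $\nabla_1 \Theta_n(\hv_{n+1},\hv_n) = \nabla \mathrm{F}_n(\hv_n) + \Av_n(\hv_n)(\hv_{n+1}-\hv_n)$, this condition reads
\begin{equation*}
\Dv_n^\top\bigl[\nabla \mathrm{F}_n(\hv_n) + \Av_n(\hv_n)(\hv_{n+1}-\hv_n)\bigr] = \zerob.
\end{equation*}
By Assumption~\ref{ass:subspace}, we can write $\nabla \mathrm{F}_n(\hv_n) = \Dv_n \wv_n$ for some $\wv_n\in\eR^{M_n}$. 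Left-multiplying the above equality by $\wv_n^\top$ yields
\begin{equation*}
\|\nabla \mathrm{F}_n(\hv_n)\|^2 = -\nabla \mathrm{F}_n(\hv_n)^\top \Av_n(\hv_n)(\hv_{n+1}-\hv_n).
\end{equation*}

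Next, since $\Av_n(\hv_n)$ is positive semi-definite (sum of $\Rv_n$, $\Vb_0$, and $\Vb^\top\Diag(\bb(\hv_n))\Vb$, each of which is positive semi-definite thanks to Assumption~\ref{a:basic}\ref{a:basiciv}), the Cauchy--Schwarz inequality applied to the semi-inner product $(\mathbf{u},\mathbf{v})\mapsto \mathbf{u}^\top \Av_n(\hv_n)\mathbf{v}$ gives
\begin{equation*}
\|\nabla \mathrm{F}_n(\hv_n)\|^4 \le \bigl[\nabla \mathrm{F}_n(\hv_n)^\top\Av_n(\hv_n)\nabla \mathrm{F}_n(\hv_n)\bigr] \cdot \bigl[(\hv_{n+1}-\hv_n)^\top \Av_n(\hv_n)(\hv_{n+1}-\hv_n)\bigr].
\end{equation*}
Bounding the first bracket by $|||\Av_n(\hv_n)|||\,\|\nabla \mathrm{F}_n(\hv_n)\|^2$ and simplifying (the case $\nabla \mathrm{F}_n(\hv_n)=\zerob$ being trivial) delivers
\begin{equation*}
\|\nabla \mathrm{F}_n(\hv_n)\|^2 \le |||\Av_n(\hv_n)|||\, (\hv_{n+1}-\hv_n)^\top \Av_n(\hv_n)(\hv_{n+1}-\hv_n).
\end{equation*}

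The final step is to show that $(|||\Av_n(\hv_n)|||)_{n\ge 1}$ is $\as$ bounded. From \eqref{e:defA}, $|||\Av_n(\hv_n)||| \le |||\Rv_n||| + |||\Vb_0||| + \overline{\nu}\,|||\Vb|||^2$ by Assumption~\ref{a:basic}\ref{a:basiciv}, and Lemma~\ref{a:asympt}\ref{a:asympti} ensures that $|||\Rv_n|||$ is $\as$ bounded as a convergent sequence. Combined with the $\as$ summability of $\bigl((\hv_{n+1}-\hv_n)^\top \Av_n(\hv_n)(\hv_{n+1}-\hv_n)\bigr)_{n\ge 1}$ from Lemma~\ref{le:summahvn}, the conclusion follows. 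The only delicate point is that the upper bound on $|||\Av_n(\hv_n)|||$ is a random variable rather than a deterministic constant; however, this is harmless since everything is handled pathwise (almost surely), and a finite random multiplicative factor preserves summability.
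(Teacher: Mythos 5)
Your proof is correct, and it reaches the paper's key intermediate bound by a somewhat different mechanism. The paper also starts from the stationarity condition of the subspace minimization (its Eq.~\eqref{e:opttildeuvn} is exactly your normal equation), but it then compares $\Theta_n(\hv_{n+1},\hv_n)$ with the value of the majorant at an exact line-search point $\hv_n-\Phi_n\nabla\mathrm{F}_n(\hv_n)$ along the negative gradient, which lies in $\ran\Dv_n$ by Assumption~\ref{ass:subspace}; bounding the optimal stepsize from below, $\Phi_n\ge\alpha_\epsilon$ for $n$ large enough via the spectral bound $\Av_n(\hv_n)\preceq(|||\Rv_n+\Vb_0|||+\overline{\nu}|||\Vb|||^2)\Ib_N$ and Lemma~\ref{a:asympt}\ref{a:asympti}, it obtains $\alpha_\epsilon\|\nabla\mathrm{F}_n(\hv_n)\|^2\le(\hv_{n+1}-\hv_n)^\top\Av_n(\hv_n)(\hv_{n+1}-\hv_n)$. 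You instead exploit the fact that $\nabla\mathrm{F}_n(\hv_n)\in\ran\Dv_n$ to turn the normal equation into the exact identity $\|\nabla\mathrm{F}_n(\hv_n)\|^2=-\nabla\mathrm{F}_n(\hv_n)^\top\Av_n(\hv_n)(\hv_{n+1}-\hv_n)$ and then apply Cauchy--Schwarz in the $\Av_n(\hv_n)$-semi-inner product (legitimate since $\Av_n(\hv_n)$ is positive semidefinite under Assumption~\ref{a:basic}\ref{a:basiciv}), which yields the same type of bound with the $\as$ bounded random factor $|||\Av_n(\hv_n)|||$ in place of the eventually valid deterministic constant $\alpha_\epsilon^{-1}$; both versions then conclude with Lemma~\ref{le:summahvn} by a pathwise argument. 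Your route is slightly more direct in that it avoids introducing the auxiliary scalar minimization and the case distinction on whether the gradient vanishes is handled trivially; the paper's route has the small advantage of producing an explicit deterministic constant $\alpha_\epsilon$, which is reused in spirit in the convergence-rate discussion, but for the purposes of this lemma the two arguments are equally valid.
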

\begin{proof} See Appendix \ref{ap:summagrad}.
\end{proof}
\medskip 
\noindent By gathering all the previous results, our main convergence results can now be stated:
\begin{proposition}\label{p:convmain}
Assume that Assumptions \ref{a:basic}-\ref{a:conv} hold. Then, the following hold:
\begin{enumerate}
\item \label{p:convmaini} The set of cluster points of $(\hv_n)_{n\ge 1}$ is almost surely a nonempty compact connected set.
\item \label{p:convmainii} Any element of this set is almost surely a critical point of $F$.
\item \label{p:convmainiii} If the functions $(\psi_s)_{1\le s \le S}$ are convex, then $(\hv_n)_{n\ge 1}$ converges $\as$ to the unique (global)
minimizer of $F$.
\end{enumerate}
\end{proposition}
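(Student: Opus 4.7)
The plan is to chain together the preparatory lemmas to obtain the three conclusions in sequence. For part \ref{p:convmaini}, I would start from Lemma~\ref{le:hbounded}, which already provides a.s.\ boundedness of $(\hv_n)_{n\ge 1}$, giving at once that the cluster set is a.s.\ nonempty and compact. The missing ingredient for connectedness is that $\|\hv_{n+1}-\hv_n\|\to 0$ a.s., which is the classical Ostrowski criterion guaranteeing that the cluster set of a bounded sequence is connected. To get this, I would combine Lemma~\ref{le:summahvn}, which provides the a.s.\ summability of $(\hv_{n+1}-\hv_n)^\top \Av_n(\hv_n)(\hv_{n+1}-\hv_n)$, with a uniform lower bound on the smallest eigenvalue of $\Av_n(\hv_n)$. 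The latter bound comes from the inequality $\Av_n(\hv_n)\succeq \Rv_n+\Vb_0$ (since the diagonal weights $\bb(\hv_n)$ are nonnegative by Assumption~\ref{a:basic}\ref{a:basiciv}), Lemma~\ref{a:asympt}\ref{a:asympti} ($\Rv_n\to\Rb$ a.s.), and Assumption~\ref{a:conv}\ref{a:basici} ($\Rb+\Vb_0\succ 0$): eventually $\Av_n(\hv_n)\succeq \tfrac{1}{2}(\Rb+\Vb_0)\succ 0$ a.s., so $\|\hv_{n+1}-\hv_n\|^2$ is a.s.\ summable and in particular vanishes.

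For part \ref{p:convmainii}, let $\bar{\hv}$ be any cluster point and extract a subsequence $\hv_{n_k}\to\bar\hv$. Using the explicit form of the gradient from \eqref{e:gradFn}--\eqref{e:defcv} together with the expression of $\nabla F$, one gets the decomposition
\begin{equation*}
\nabla F(\hv_{n_k}) = \nabla \mathrm{F}_{n_k}(\hv_{n_k}) - (\Rv_{n_k}-\Rb)\hv_{n_k} + (\rv_{n_k}-\rb).
\end{equation*}
By Lemma~\ref{le:summagrad}, $\nabla \mathrm{F}_{n_k}(\hv_{n_k})\to 0$ a.s. By Lemma~\ref{a:asympt}\ref{a:asympti} together with the a.s.\ boundedness of $(\hv_n)_{n\ge 1}$ granted by Lemma~\ref{le:hbounded}, the two other terms tend a.s.\ to zero. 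Finally, continuity of $\nabla F$ (a consequence of Assumption~\ref{a:basic}\ref{a:basicii}) yields $\nabla F(\hv_{n_k})\to\nabla F(\bar\hv)$, hence $\nabla F(\bar\hv)=0$ a.s.

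For part \ref{p:convmainiii}, convexity of the $(\psi_s)_{1\le s \le S}$ makes $F$ a strongly convex function, because the quadratic part involves $\Rb+\Vb_0$ which is positive definite by Assumption~\ref{a:conv}\ref{a:basici}. Hence $F$ admits a unique minimizer $\hb^\star$, which is its unique critical point. By part \ref{p:convmainii}, every cluster point of $(\hv_n)_{n\ge 1}$ coincides a.s.\ with $\hb^\star$, and by part \ref{p:convmaini} the cluster set is a.s.\ nonempty, so $\hv_n\to \hb^\star$ a.s.

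The main obstacle I anticipate is the uniform lower bound on the smallest eigenvalue of $\Av_n(\hv_n)$ needed in part \ref{p:convmaini}: it requires a careful almost-sure argument that is valid eventually in $n$, combining the a.s.\ spectral convergence $\Rv_n\to\Rb$ from Lemma~\ref{a:asympt} with the strict positivity of $\Rb+\Vb_0$. Once this lower bound is in hand, the rest follows by assembling the lemmas, using continuity of $\nabla F$ and boundedness of $(\hv_n)_{n\ge 1}$ to pass to the limit along cluster subsequences, and finally invoking strong convexity in the convex case to upgrade cluster-point characterization to full convergence.
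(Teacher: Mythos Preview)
Your proposal is correct and follows essentially the same route as the paper: boundedness from Lemma~\ref{le:hbounded} plus an eventual uniform lower bound on $\Av_n(\hv_n)$ (via $\Rv_n\to\Rb$ and Assumption~\ref{a:conv}\ref{a:basici}) to force $\|\hv_{n+1}-\hv_n\|\to 0$ and invoke Ostrowski for \ref{p:convmaini}; the same gradient decomposition $\nabla F(\hv_n)=\nabla\mathrm{F}_n(\hv_n)-(\Rv_n-\Rb)\hv_n+(\rv_n-\rb)$ combined with Lemmas~\ref{a:asympt}, \ref{le:hbounded}, \ref{le:summagrad} and continuity of $\nabla F$ for \ref{p:convmainii}; and strong convexity of $F$ to collapse the cluster set for \ref{p:convmainiii}. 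The only cosmetic difference is that the paper writes the eventual lower bound as $\Rb-\epsilon\Ib_N+\Vb_0$ rather than $\tfrac12(\Rb+\Vb_0)$, which is immaterial.
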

\begin{proof}
See Appendix \ref{ap:convmaini}.
\end{proof}
\medskip
\noindent It can be noticed that the conclusion of Proposition \ref{p:convmain}\ref{p:convmainiii} is still valid if the functions $(\psi_s)_{1\le s \le S}$
are nonconvex, they are twice continuously differentiable, and the regularization constants $(\lambda_s)_{1\le s \le S}$ as defined in Table \ref{t:penalty} are small enough
so that the function $F$ is strongly convex.

\subsection{\correcEC{Convergence rate}}

\correcEC{Based on our recent results in \cite{Chouzenoux16spl}, we provide a convergence rate result for Algorithm~\eqref{e:MMmini}
in the case when the functions $(\psi_s)_{1\le s \le S}$ are convex and twice differentiable. 
\begin{proposition} 
Suppose that Assumptions \ref{a:basic}-\ref{a:conv} hold. Let $\epsilon \in ]0,+\infty[$ be such that $\epsilon \Ib_N \prec \Rb+ \Vb_0 $. 
Then, there exists almost surely $n_\epsilon\in \eN \setminus \left\{0\right\}$ such that, for every $n\ge n_\epsilon$, $\nabla^2 \mathrm{F}_n(\hv_n) \succeq \Rb-\epsilon \Ib_N + \Vb_0$ and
\begin{equation}\label{e:boundinfFvnfinal}
\mathrm{F}_n(\hv_{n+1})-\inf \mathrm{F}_n \le \theta
\big(\mathrm{F}_n(\hv_n) - \inf \mathrm{F}_n\big)
\end{equation}
where $\theta \in [0,1)$.
%the decay rate $\theta_n$ is uniformly strictly lower
%than 1.
%where $\theta_n = 1-(1+\epsilon)^{-1}\widetilde{\theta}_n$, 
%\begin{equation}
% \label{eq:thetantilde} \widetilde{\theta}_n = 
%\frac{\big(\nabla \mathrm{F}_n(\hv_n)\big)^\top \Cv_n(\hv_n) \nabla \mathrm{F}_n(\hv_n)}
%{\big(\nabla \mathrm{F}_n(\hv_n)\big)^\top\big(\nabla^2 \mathrm{F}_n(\hv_n)\big)^{-1}\nabla \mathrm{F}_n(\hv_n)},
%\end{equation}
%$\Cv_n(\hv_n) = \Dv_n (\Dv_n^\top \Av_n(\hv_n) \Dv_n)^\dag  \Dv_n^\top$, and $(\cdot)^\dag$ denotes the pseudo-inverse operation. Furthermore, some lower and upper bounds on $\theta_n$ are given by
%%$\theta_n \in [\underline{\theta}_n,\overline{\theta}_n]$ with
%\begin{align}
%&\underline{\theta}_n = 1-(1+\epsilon)^{-1} \underline{\kappa}_n^{-1} >0,
%\label{e:lowertheta}\\
%&\overline{\theta}_n = 1 - (1+\epsilon)^{-1} \overline{\kappa}_n^{-1}\left(1-\Big(\frac{\overline{\sigma}_n-\underline{\sigma}_n}{\overline{\sigma}_n+\underline{\sigma}_n}\Big)^2
%\right) <1,
%\label{e:uppertheta}
%\end{align}
%where
%$\underline{\kappa}_n \ge 1$ (resp. $\overline{\kappa}_n$) is the minimum (resp. maximum) eigenvalue of
%$\big(\Av_n(\hv_n)\big)^{\frac12} \big(\nabla^2 \mathrm{F}_n(\hv_n)\big)^{-1} \big(\Av_n(\hv_n)\big)^{\frac12}$, and
%$\underline{\sigma}_n$ (resp. $\overline{\sigma}_n$) is the minimum (resp. maximum) eigenvalue of
%$\nabla^2 \mathrm{F}_n(\hv_n)$.
\end{proposition}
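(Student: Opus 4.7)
The plan is to combine three ingredients: (i) the almost sure convergence $\mathbf{R}_n \to \mathbf{R}$ given by Lemma \ref{a:asympt}\ref{a:asympti}, (ii) the convexity of $\psi_s$ (which, combined with the $\ell_2$-composition, yields nonnegative Hessian contributions from the penalty) to obtain asymptotic strong convexity of $\mathrm{F}_n$, and (iii) the local linear convergence result of \cite{Chouzenoux16spl} for the deterministic MM subspace recursion applied to the (now almost surely strongly convex) instantaneous cost $\mathrm{F}_n$.

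First I would establish the Hessian lower bound. Since $\nabla^2 \mathrm{F}_n(\hb) = \Rv_n + \Vb_0 + \sum_{s=1}^S \nabla^2 \big[\psi_s(\|\Vb_s \cdot -\vb_s\|)\big](\hb)$, and since each $\psi_s$ is even and convex with minimum at $0$, it is nondecreasing on $[0,+\infty)$, so $\psi_s(\|\Vb_s \cdot -\vb_s\|)$ is convex and hence its Hessian is positive semidefinite. Therefore $\nabla^2 \mathrm{F}_n(\hb) \succeq \Rv_n + \Vb_0$ for every $\hb\in \eR^N$. By Lemma \ref{a:asympt}\ref{a:asympti}, $|||\Rv_n - \Rb||| \to 0$ almost surely, so there exists (a.s.) an integer $n_\epsilon$ such that $\Rv_n \succeq \Rb - \epsilon \Ib_N$ for every $n \ge n_\epsilon$, yielding the first claim.

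Second, the hypothesis $\epsilon \Ib_N \prec \Rb + \Vb_0$ together with the previous step shows that, for $n \ge n_\epsilon$, $\mathrm{F}_n$ is $\mu$-strongly convex with modulus $\mu = \lambda_{\min}(\Rb - \epsilon \Ib_N + \Vb_0) > 0$. I would then pair this with an upper bound on the Hessian: since $(\hv_n)_{n\ge 1}$ is a.s. bounded by Lemma~\ref{le:hbounded} and $\Rv_n$ converges a.s., there exists (a.s.) a finite constant $L$ with $\nabla^2 \mathrm{F}_n(\hv_n) \preceq L \Ib_N$ for all $n$ large enough; the penalty contribution is uniformly bounded thanks to Assumption \ref{a:basic}\ref{a:basiciv} (i.e., $\nu_s \le \overline{\nu}$), which forces the quadratic majorant matrix $\Av_n(\hv_n)$ to satisfy $\Av_n(\hv_n) \preceq L \Ib_N$ and $\Av_n(\hv_n) \succeq \mu \Ib_N$ for $n$ large enough.

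Third, I would invoke the MM subspace descent estimate. By construction, $\hv_{n+1}$ minimizes the quadratic majorant $\Theta_n(\cdot,\hv_n)$ over $\ran \Dv_n$, and by Assumption~\ref{ass:subspace} this subspace contains $\nabla \mathrm{F}_n(\hv_n)$. A standard one-dimensional line-search comparison inside the subspace then gives $\mathrm{F}_n(\hv_{n+1}) \le \Theta_n(\hv_{n+1},\hv_n) \le \mathrm{F}_n(\hv_n) - \tfrac{1}{2L}\|\nabla \mathrm{F}_n(\hv_n)\|^2$. Combining with the Polyak-{\L}ojasiewicz inequality $\|\nabla \mathrm{F}_n(\hv_n)\|^2 \ge 2\mu\big(\mathrm{F}_n(\hv_n) - \inf \mathrm{F}_n\big)$ (which follows from $\mu$-strong convexity of $\mathrm{F}_n$ for $n \ge n_\epsilon$), subtracting $\inf \mathrm{F}_n$ from both sides yields \eqref{e:boundinfFvnfinal} with $\theta = 1 - \mu/L \in [0,1)$.

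The main obstacle will be making the upper bound $L$ genuinely uniform in $n$: the penalty Hessian depends on $\hv_n$ through the weights $\nu_s(\|\Vb_s \hv_n - \vb_s\|)$, and although Assumption \ref{a:basic}\ref{a:basiciv} bounds these weights, turning this into a pointwise bound on the true Hessian $\nabla^2 \psi_s(\|\Vb_s \cdot - \vb_s\|)$ requires the second derivative $\ddot{\psi}_s$ to behave correctly on the a.s.\ bounded trajectory $(\hv_n)$. In the convex, twice differentiable case listed in Table~\ref{t:penalty} this is routine, but the argument must be phrased so that $L$ is an $\XX_n$-measurable random variable that is almost surely finite, and so that the dependence of $\theta$ on $n$ disappears asymptotically; otherwise one only obtains a sequence $\theta_n \to 1 - \mu/L$ rather than a fixed $\theta \in [0,1)$.
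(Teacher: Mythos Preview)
Your argument is sound. The paper itself does not give a self-contained proof of this proposition: it states the result and defers the details to~\cite{Chouzenoux16spl}. The ingredients you assemble---convexity of each $\psi_s(\|\cdot\|)$ giving $\nabla^2\mathrm{F}_n(\hb)\succeq\Rv_n+\Vb_0$, the almost-sure convergence $\Rv_n\to\Rb$ from Lemma~\ref{a:asympt}\ref{a:asympti}, the sufficient-decrease inequality coming from the subspace minimization of the quadratic majorant, and the Polyak--{\L}ojasiewicz inequality---are the right ones, and they dovetail with the estimates the paper already proves in Appendices~\ref{ap:summahvn} and~\ref{ap:summagrad}.

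Your self-identified obstacle is not a real obstacle. The upper curvature constant that enters the descent estimate is a bound on the \emph{majorant} matrix $\Av_n(\hv_n)$, not on the true Hessian $\nabla^2\mathrm{F}_n(\hv_n)$, and Assumption~\ref{a:basic}\ref{a:basiciv} bounds $\Av_n(\hv_n)$ directly, without any reference to $\ddot\psi_s$ or to the trajectory $(\hv_n)_{n\ge 1}$. Exactly as in the proof of Lemma~\ref{le:summagrad} (see \eqref{e:boundAA}--\eqref{e:defalpha}), one has $\Av_n(\hv_n)\preceq\alpha_\epsilon^{-1}\Ib_N$ for all $n\ge n_\epsilon$, where $\alpha_\epsilon=(|||\Rb+\Vb_0|||+\overline{\nu}\,|||\Vb|||^2+\epsilon)^{-1}$ is a \emph{deterministic} constant; only the threshold $n_\epsilon$ is random. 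Your line-search comparison then yields $\mathrm{F}_n(\hv_{n+1})\le\mathrm{F}_n(\hv_n)-\tfrac{\alpha_\epsilon}{2}\|\nabla\mathrm{F}_n(\hv_n)\|^2$ (this is \eqref{e:difFngradFnPhin} combined with $\Phi_n\ge\alpha_\epsilon$), and the PL inequality with deterministic modulus $\mu=\lambda_{\min}(\Rb-\epsilon\Ib_N+\Vb_0)$ gives a fixed $\theta=1-\mu\,\alpha_\epsilon\in[0,1)$. So there is no $n$-dependence to worry about.
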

More details about the expression of the decay rate can be found in \cite{Chouzenoux16spl}.
}

\section{Application to 2D \correcEC{system} identification} \label{se:ident}

\subsection{Problem statement}

We first demonstrate the efficiency of the proposed stochastic algorithm in a 2D \correcEC{system} identification problem. 
We consider the following observation model:
\begin{equation}
\yb = S(\overline{\hb}) \xb + \wb,
\end{equation}
where $\xb \in \eR^L$ and $\yb \in \eR^L$ represent the original and degraded versions of a given image, $\overline{\hb} \in \eR^{N}$ is the vectorized version of an unknown two-dimensional blur kernel, $S$ is the linear operator which maps the kernel to its associated Hankel-block Hankel matrix form, and $\wb \in \eR^L$ represents a realization of an additive noise. When the images $\xb$
and $\yb$ are of very large size, finding an estimate $\widehat{\hb} \in \eR^{N}$ of the blur kernel can be quite memory consuming, but one can expect good estimation performance by learning the blur kernel through a sweep of blocks in the dataset. 

Let us denote by $\Xb \in \eR^{L \times N}$ the matrix such that $S(\hb) \xb = \Xb \hb$. Then, we propose to define $\widehat{\hb}$ as a solution to \eqref{e:probdet}, where, for 
every \correcEC{$n\in \eN \setminus \left\{0\right\}$}, $\yv_n \in \eR^Q $ and $\Xv_n^\top \in \eR^{Q \times N}$, are subparts of $\yb$ and $\Xb$, respectively, corresponding to $Q\in \left\{1,\ldots,L\right\}$ lines of this vector/matrix.
%(we have thus $N = M/Q$).
%with fixed size parameter $J \in \left\{1,\ldots,M\right\}$. 
For the regularization term $\Psi$, we consider, for every $s \in \left\{1,\ldots,N\right\}$ ($S=N$), an isotropic penalization on the gradient between neighboring coefficients of the blur kernel, i.e., $P_s = 2$ and $\Vb_s = \left[\Deltab_s^{\rm h} \, \, \Deltab_s^{\rm v} \right]^\top$, where $\Deltab_s^{\rm h}  \in \eR^N$ (resp. $\Deltab_s^{\rm v} \in \eR^N$) is the horizontal (resp. vertical) gradient operator applied at pixel $s$. The smoothness of $\hb$ is then enforced by choosing, for every $s \in \left\{1,\ldots,S\right\}$ 
and $u\in \eR$, $\psi_s(u) = \lambda \sqrt{1 + u^2/\delta^2}$ with $(\lambda, \delta)\in (0,+\infty)^2$. Finally, in order to guarantee the existence of a unique minimizer, the strong convexity of $F$ is imposed by taking $\vb_0 = \zerob$ and $\Vb_0 = \tau \Ib_N$, where $\tau$ is a small positive value (typically $\tau = 10^{-10}$). 

\subsection{Simulation results}

The original image, presented in Figure~\ref{fig:simul}(a), is a satellite image, of size $4096 \times 4096$ pixels. The original blur kernel $\overline{\hb}$ with size $21 \times 21$, and the resulting blurred image, which has been corrupted with a zero-mean white Gaussan noise with standard deviation $\sigma = 0.03$ (the blurred signal-to-noise ratio equals 25.7 dB), are displayed in Figures~\ref{fig:simul}(b)(c). Figure~\ref{fig:simul}(d)~presents the estimated kernel, using Algorithm~\ref{e:stochMMalg}, with the subspace given by \eqref{e:2memorysub}, leading to the so-called \correcEC{stochastic MM memory gradient (S3MG)} algorithm. Parameters $(\lambda, \delta)$ were adjusted so as to minimize the normalized root mean square estimation error, here equal to $0.064$. Figure~\ref{fig:comp} illustrates the variations of this estimation error with respect to the computation time for the proposed algorithm, the SGD algorithm with a decreasing stepsize proportional to $n^{-1/2}$, the regularized dual averaging (RDA) method with a constant stepsize from \cite{Xiao2010}, and the accelerated stochastic gradient averaging \correcEC{SAGA method with a constant stepsize from \cite{Defazio14b}. Tests were running } on an Intel(R) Xeon(R) E5-2630 @ 2.6GHz using a Matlab~7 implementation. Note that for the latter three algorithms, the stepsize parameter was optimized manually so as to obtain the best performance in terms of convergence speed. Finally, note that all tested algorithms were observed to provide asymptotically the same estimation quality, whatever the size of the blocks. In this example, as illustrated in Figure~\ref{fig:blk}, the best trade-off in terms of convergence speed is obtained for $Q = 256 \times 256$. 

\begin{figure}[h!]
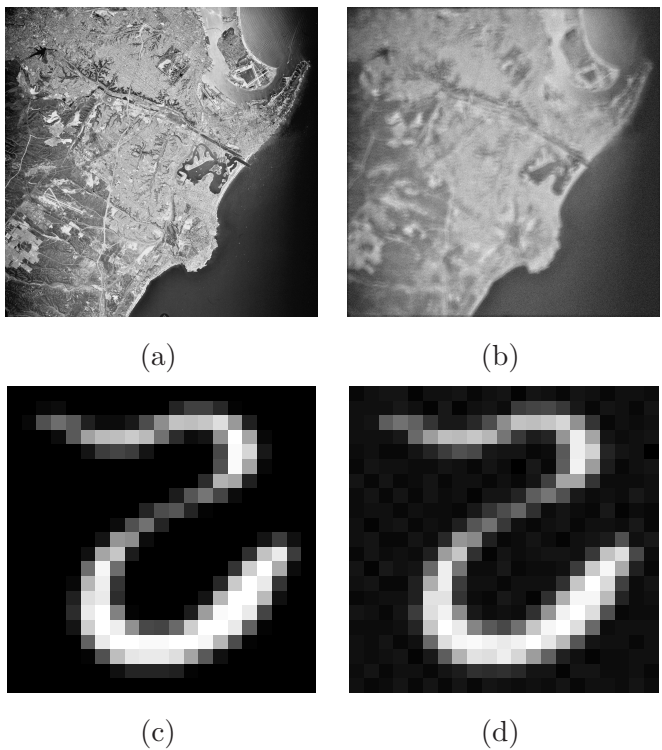

\centering
\begin{tabular}{@{}c@{}c@{}}
\includegraphics[width=4.5cm]{x} & \includegraphics[width=4.5cm]{y} \\
(a) & (b)\\
\includegraphics[width=4.5cm]{h} & \includegraphics[width=4.5cm]{hest_large}\\
(c) & (d)
\end{tabular}
\caption{(a) Original image. (b) Blurred and noisy image. (c) Original blur kernel. (d) Estimated blur kernel, with relative error $0.064$.}
\label{fig:simul}
\end{figure}

\begin{figure}[h]
\centering
\includegraphics[width=8cm]{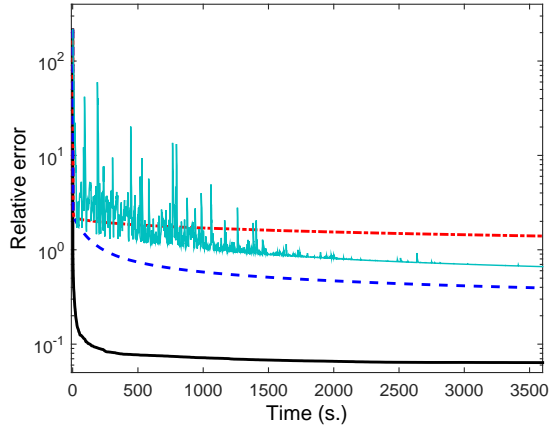}
\caption{Comparison of S3MG algorithm (solid black line),
SGD algorithm with decreasing stepsize $\propto n^{-1/2}$ (dashed-dotted red line), RDA algorithm with constant stepsize (dashed blue line) and \correcEC{SAGA} algorithm with constant stepsize (turquoise thin line).}
\label{fig:comp}
\end{figure}

\begin{figure}[h]
\centering
\includegraphics[width=8cm]{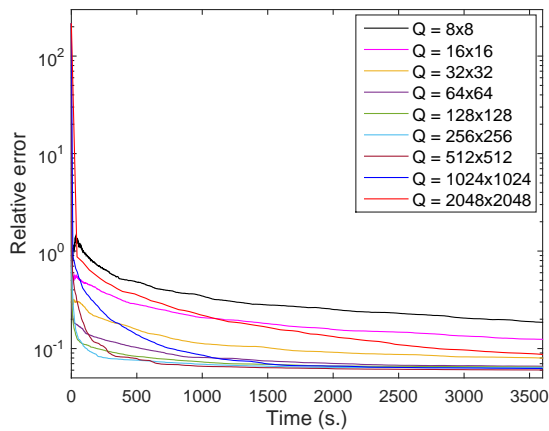}
\caption{Effect of the block size $Q$ on the convergence speed of S3MG.}
\label{fig:blk}
\end{figure}

% !TEX root = ./stoch_3MG.tex

\section{Application to sparse adaptive filtering}
\label{sec:adapt}
\subsection{Problem statement}
As emphasized in Sections \ref{se:probform} and \ref{se:propmeth}, one of the advantages of Algorithm \ref{e:stochMMalg} compared with some other online optimization algorithms is that it is able to
deal with adaptive data processing problems.
In this section, we apply the S3MG algorithm to the identification of a sparse time-varying system. Given a real-valued  discrete-time input signal $\big(x(n)\big)_{n\in \eZ}$, the output of the system
at time $n\ge 1$ is defined as
\begin{equation}\label{e:modadapt}
y_n = \Xb_n^\top \overline{\hb}_n + w_n,
\end{equation}
where $\Xb_n = [x(n-N+1),\ldots,x(n)]^\top$, $w_n$ models some measurement noise, and $\overline{\hb}_n\in \eR^N$ gathers the unknown filter taps at time $n$.
%$\xb \in \eR^L$, the output of the system is defined by $\yb \in \eR^L$, with, for every $l \in \left\{1,\ldots,L\right\}$, 
%\begin{equation}
%y_l = \left\{\xb \ast \overline{\hb}_l \right\}_l + w_l,
%\end{equation}
%where $\wb \in \eR^L$ models some measurement noise, $\overline{\hb}_l \in \eR^P$ gathers the unknown filter coefficients at time $l$, and $\ast$ denotes the convolution product with circulant boundary assumptions. Let us denote $\hb \in \eR^N$, with $N = PL$, the concatenated version of $(\overline{\hb}_l)_{1 \leq l \leq L}$, and $\Xb \in \eR^{L \times N}$ the Toeplitz matrix such that $\yb = \Xb \overline{\hb} + \wb$. 
Then, the objective is to provide an estimate of the vector $\overline{\hb}_n$ at each time by
solving Problem~\eqref{e:probdet} where the regularization function $\Psi$ is chosen in order to promote the sparsity of the impulse response of the time-varying filter.

\subsection{Simulation results}
We generate data according to Model \eqref{e:modadapt} where the input signal $\big(x(n)\big)_{n\in \eZ}$ consists of identically and independent random binary values $\left\{-1,+1\right\}$. The
measurement noise $(w_n)_{n\in \eZ}$ is white Gaussian with zero mean and variance $0.05$. In order to evaluate the tracking capability of the proposed S3MG method, 
the following time-varying linear system is considered:
 \begin{equation}
 \overline{\hb}_n = 
 \begin{cases}
 \overline{\hb}_1 & \mbox{if $n \leq L/2$,}\\
 \overline{\hb}_{L/2+1}  & \mbox{if $n \geq L/2+1$.}
 \end{cases}
 \end{equation}
 The filter length $N$ is equal to 200 and the output of the system is observed at every time $n\in \{1,\ldots,L\}$ with $L = 5000$.
% More precisely, for $1 \leq l \leq L/2$, $\overline{\hb}_l = \overline{\hb}_1$ is a time invariant filter with size $P = 200$. Then, at $l=L/2+1$, the filter changes suddenly, in such a way that for all $p \in \left\{  1,\ldots,P \right\}$, for all $l \geq L/2+1$, $\overline{h}_{p,l} = \overline{h}_{p,L/2+1} =\overline{h}_{0.95 p,1}$. 
%An almost sparse filter is considered for $\overline{\hb}_1$, corresponding to the convolution between a sparse vector with ten non-zero coefficients and a Gaussian kernel with size 7. The coefficients of filters $\overline{\hb}_1$ and $\overline{\hb}_{L/2+1}$ are represented on Figure~\ref{fig:filters}. 
The sparse impulse responses corresponding to vectors $ \overline{\hb}_1$ and $\overline{\hb}_{L/2+1} $ are represented in Figure~\ref{fig:filters}.

We compute, for every $n \in \left\{1,\ldots,L\right\}$, the Euclidean norm of the error between the current estimate $\hb_n$ and the true filter coefficient vector $\overline{\hb}_n$. The minimal estimation error is obtained for the nonconvex Welsch penalty function (see Table \ref{t:penalty}) and 
a smoothed $\ell_2-\ell_0$ regularization function is thus employed by setting $S = N$, $\vb_0 = \zerob$, $\Vb_0 =  \Ob_N$, 
and, for every $s\in \{1,\ldots,N\}$, $P_s= 1$, $\vb_s = 0$, while $\Vb_s \in \eR^{1\times N}$ is the $s$-th vector of the canonical basis of $\eR^N$.

We present the results generated by S3MG in Figure~\ref{fig:resfilter2} for two values of the forgetting factor $\vartheta$, namely $\vartheta = 1$ which corresponds to a non adaptive strategy, and $\vartheta = 0.995$ which appears to be the best choice in terms of tracking properties for this example. 

We also show the results obtained with several state-of-the-art approaches in the context of sparse adaptive filtering, namely SPAL \cite{Kopsinis_Y_2011_ieee-tsp_online_ssisr}, RLMS \cite{Chen_Y_2010_unpublished_Regularized_LMSa}, RZAAPA \cite{Meng_R_2011_procssdp_sparsity_aapaasi} and SM-PAPA \cite{Werner07}. Note that, for each tested method, the involved parameters (stepsize, regularization weight, blocksize) have been tuned manually in order to optimize the performance in terms of error decay.

\begin{figure}[h]
\centering
\includegraphics[width=8cm]{Filters.eps}
\caption{Values of the coefficients of the considered sparse filters $\overline{\hb}_1$ (top) and $\overline{\hb}_{L/2+1}$ (bottom).}
\label{fig:filters}
\end{figure}

%\begin{figure}[h]
%\centering
%\includegraphics[width=8cm]{Comparaison_.eps}
%\caption{non adaptive case}
%\label{fig:resfilter1}
%\end{figure}

\begin{figure}[h]
\centering
\includegraphics[width=8cm]{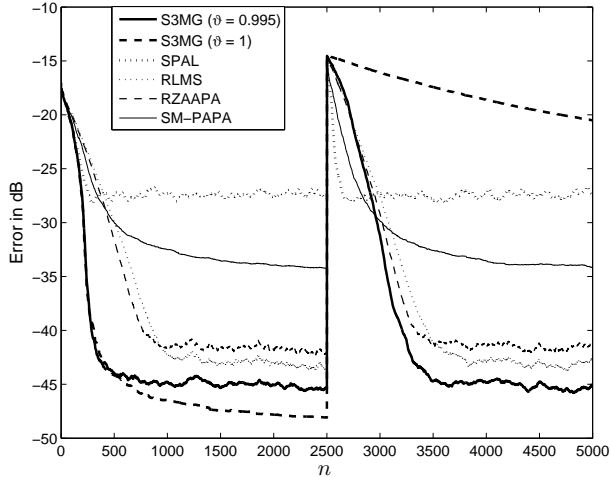}
\caption{Quadratic estimation error on the filter coefficients as a function of time index $n$ for various adaptive algorithms.}
\label{fig:resfilter2}
\end{figure}

\section{Conclusion} \label{se:conclu}
In this work, we have proposed a stochastic MM subspace algorithm for online penalized least squares estimation problems.
The method makes it possible to use large-size datasets the second-order moments of which are not known a priori. We have shown 
that the proposed algorithm is of the same order of complexity as the classical RLS algorithm and that its computational cost 
can be reduced by taking advantage of specific forms of the search subspace. The choice of a memory gradient subspace
led to the S3MG algorithm whose good numerical performance has been demonstrated in the context of 2D \correcEC{system} identification for large scale image processing problems. 
%In particular, our  numerical simulations have shown that the best results in terms of convergence speed are obtained for intermediate 
%block-sizes. 
In the context of sparse adaptive filtering, S3MG has also been shown to be competitive with respect to recent methods. 
Although an analysis of the convergence of the proposed method has been carried out,
it would be interesting to extend the obtained results to weaker assumptions. In addition, in a nonstationary context, a theoretical
study of the tracking abilities of the algorithm should be conducted. \correcEC{Finally, let us emphasize that a detailed analysis of the convergence rate of the proposed method has been undertaken in our recent paper~\cite{Chouzenoux16spl}.}
% mixing
%In addition, we plan to apply this technique to system identification or inverse modeling using adaptive filters.

\appendix

\section{Proof of Lemma \ref{a:asympt}} \label{ap:asympt}
Property \ref{a:asympti} is a consequence of the ergodic theorem \cite[Theorem 13.12]{Davidson_J_1994_book_Stochastic_lt}.
%an expression of Kolmogorov's strong law of large numbers.\\
In addition, the law of the iterated logarithm for martingale difference sequences \cite{Stout_W_1970-ann_math_stat_Hartman_wlil} ensures that
\begin{align}
&\limsup_{n\to +\infty} \frac{|\sum_{k=1}^n (\|\yv_k\|^2 - \varrho)|}{\big(n \log(\log n)\big)^{1/2}} < +\infty \qquad \;\as\\
&\limsup_{n\to +\infty} \frac{\|\sum_{k=1}^n (\Xv_k \yv_k - \rb)\|}{\big(n \log(\log n)\big)^{1/2}} < +\infty \qquad \as\\
&\limsup_{n\to +\infty} \frac{|||\sum_{k=1}^n (\Xv_k \Xv_k^\top - \Rb)|||}{\big(n \log(\log n)\big)^{1/2}} < +\infty
 \qquad \!\!\!\as
\end{align}
that is
\begin{align}
&\limsup_{n\to +\infty} \frac{n^{1/2}|\rho_n - \varrho|}{\big(\log(\log n)\big)^{1/2}} < +\infty  \qquad \as \label{e:rhonlogiterbis}\\
&\limsup_{n\to +\infty} \frac{n^{1/2}\|\rv_n - \rb\|}{\big(\log(\log n)\big)^{1/2}} < +\infty  \qquad \as \label{e:rvnlogiterbis}\\\
&\limsup_{n\to +\infty} \frac{n^{1/2}|||\Rv_n - \Rb |||}{\big(\log(\log n)\big)^{1/2}} < +\infty  \qquad \!\as \label{e:Rvnlogiterbis}
\end{align}
Consequently, for every $n_0\in\eN$ with $n_0\ge 2$,
\begin{equation}
\sum_{n=n_0}^{+\infty} n^{-1} |\rho_n -\varrho| \le \sup_{n\ge n_0} \left(\frac{n^{1/2}|\rho_n - \varrho|}{\big(\log(\log n)\big)^{1/2}}\right)
\Big(\sum_{n=n_0}^{+\infty} n^{-3/2} |\log(\log n)|^{1/2}\Big).
\end{equation}
Since $\sum_{n=2}^{+\infty} n^{-3/2} |\log(\log n)|^{1/2} < +\infty$, it follows from \eqref{e:rhonlogiterbis} that 
$\sum_{n=n_0}^{+\infty} n^{-1} |\rho_n -\varrho|$ converges $\as$ to 0 as $n_0 \to +\infty$, which means that the first line
in Property \ref{a:asymptii} is satisfied. By proceeding similarly, \eqref{e:rvnlogiterbis}
and \eqref{e:Rvnlogiterbis} allow us to establish the remaining two assertions in Property \ref{a:asymptii}.

\section{Proof of Lemma \ref{le:hbounded}} \label{ap:hbounded}
For every \correcEC{$n\in \eN \setminus \left\{0\right\}$}, minimizing $\Theta_n(\cdot,\hv_n)$ is equivalent to minimizing the function
\begin{equation}\label{e:deftildeTheta}
(\forall \hb \in \eR^N) \quad 
\widetilde{\Theta}_n(\hb,\hv_n) = \frac12 \hb^\top \Av_n(\hv_n) \hb-\cv_n(\hv_n)^\top \hb.
\end{equation}
It follows from Assumption \ref{a:conv}\ref{a:convi}-\ref{a:conv}\ref{a:convibis} and Lemma \ref{a:asympt}\ref{a:asympti} that there exists 
$\Lambda\in\FF$ such that $\PP(\Lambda)=1$ and, for every $\omega\in \Lambda$, 
\begin{align}
&\lim_{n\to +\infty} \rv_n(\omega) = \rb \label{e:convrnas}\\
&\lim_{n\to +\infty} \Rv_n(\omega) = \Rb \label{e:convRnas}.
\end{align}
Let $\omega \in \Lambda$. According to Assumption \ref{a:basic}\ref{a:basiciv} and Eq. \eqref{e:defbb}, 
$\bb(\hb)$ is bounded as a function of $\hb$. It is then deduced from \eqref{e:defcv} and \eqref{e:convrnas} that
$\big(\cv_n(\hv_n)(\omega)\big)_{n\ge 1}$ is bounded, i.e. there exists $\eta \in [0,+\infty)$
such that
\begin{equation}\label{e:boundc}
(\forall n \in \eN \setminus \left\{0\right\})\qquad \|\cv_n(\hv_n)(\omega)\| \le \eta.
\end{equation}
In addition, as a consequence of \eqref{e:defbb} and Assumption \ref{a:basic}\ref{a:basiciv}, for every \correcEC{$n\in \eN \setminus \left\{0\right\}$},
$\Diag\big(\bb(\hv_n)\big)$ is a positive semidefinite matrix.
Hence,  because of \eqref{e:defA}, Assumptions \ref{a:basic}\ref{a:basiciv} and \ref{a:conv}\ref{a:basici}, 
and \eqref{e:convRnas}, there exists $\epsilon \in (0,+\infty)$
and $n_0 \in \eN \setminus \left\{0\right\}$
such that 
\begin{equation}\label{e:boundA}
(\forall n \ge n_0)\qquad
\Av_n(\hv_n)(\omega) \succeq \Rb-\epsilon \Ib_N+\Vb_0 \succ \Ob_N.
\end{equation}
(It suffices to choose $\epsilon$ lower than the minimum eigenvalue of $\Rb+\Vb_0$).
% remarque à ne pas effacer:
% since \Rb_n - \Rb \to \Ob_N, the minimum eigenvalue of \Rb_n - \Rb  goes to 0 (continuity of eigenvalues of
% symmetric matrices) and there exists n_0 such that for every n \ge n_0 this eigenvalue is larger than -\epsilon
As a consequence of \eqref{e:deftildeTheta}, \eqref{e:boundc}, \eqref{e:boundA}, and the Cauchy-Schwarz inequality, we have
\begin{equation}
(\forall n \ge n_0)
(\forall \hb \in \eR^N) \quad 
\frac12 \hb^\top(\Rb-\epsilon \Ib_N+\Vb_0)\hb- \eta \|\hb\| \le \widetilde{\Theta}_n(\hb,\hv_n).
\end{equation}
Since $\Rb-\epsilon \Ib_N+\Vb_0$ is a positive definite matrix, 
the lower bound corresponds to a coercive function with respect to $\hb$. There thus exists
$\zeta \in (0,+\infty)$ such that, for every $\hb \in \eR^N$,
\begin{equation}
\|\hb\| > \zeta\quad \Rightarrow \qquad (\forall n \ge n_0)\;\; \widetilde{\Theta}_n(\hb,\hv_n)(\omega) > 0.
\end{equation}
On the other hand, since $\zerob \in \Span\big(\Dv_n(\omega)\big)$, we have
\begin{equation}
\widetilde{\Theta}_n(\hv_{n+1},\hv_n)(\omega) \le \widetilde{\Theta}_n(\zerob,\hv_n)(\omega) = 0.
\end{equation}
The last two inequalities allow us to conclude that
\begin{equation}
(\forall n \ge n_0)\qquad 
\|\hv_{n+1}(\omega)\| \le \zeta.
\end{equation}

\section{Proof of Lemma \ref{le:summahvn}} \label{ap:summahvn}
According to Assumption \correcEC{\ref{ass:subspace}}, the proposed algorithm is actually equivalent to
\begin{align}
(\forall n \in \eN \setminus \left\{0\right\}) \qquad &\hv_{n+1} = \hv_n + \Dv_n \widetilde{\uv}_n \label{e:uphvnuvt}\\
& \widetilde{\uv}_n = \argmin_{\widetilde{\ub} \in \eR^M} \Theta_n(\hv_n + \Dv_n \widetilde{\ub},\hv_{n}).
\end{align}
%Following an approach similar to \cite{}, it can be proved that
%\begin{multline}\label{e:majFnhvnp1hvn}
%(\forall n \in \eN^*)\;\;
%\mathrm{F}_n(\hv_{n+1})+ \frac12 (\hv_{n+1}-\hv_n)^\top \Av_n(\hv_n)  (\hv_{n+1}-\hv_n)\\ \le \mathrm{F}_n(\hv_{n}).
%\end{multline}
By using \eqref{e:defQc} and cancelling the derivative of the function $\widetilde{\ub} \mapsto \Theta_n(\hv_n + \Dv_n \widetilde{\ub},\hv_{n})$,
\begin{equation}\label{e:opttildeuvn}
\Dv_n^\top \nabla \mathrm{F}_n(\hv_n)  + \Dv_n^\top \Av_n(\hv_n) \Dv_n \widetilde{\uv}_n = \zerob.
\end{equation}
Hence,
\begin{align}\label{e:majFnhvnuvt}
& \Theta(\hv_{n+1},\hv_n) \nonumber\\
 = &\;\mathrm{F}_n(\hv_{n})-\frac12 \widetilde{\uv}_n^\top \Dv_n^\top \Av_n(\hv_n) \Dv_n \widetilde{\uv}_n\nonumber\\
 = &\;\mathrm{F}_n(\hv_{n})-\frac12 (\hv_{n+1}-\hv_n)^\top \Av_n(\hv_n)  (\hv_{n+1}-\hv_n).
\end{align}
In view of \eqref{e:MMmaj} and Proposition~\ref{p:majquad}, this yields
% Indeed, in view of \eqref{e:MMmaj} and \eqref{e:uphvnuvt}, we have the following inequality:
% \begin{equation} 1
% (\forall n \in \eN^*)\quad 
% \mathrm{F}_n(\hv_{n+1}) \leq \Theta_n(\hv_n + \Dv_n \widetilde{\uv}_n,\hv_{n}).
% \end{equation}
% where, using \eqref{e:defQc} and cancelling the derivative of the function $\widetilde{\ub} \mapsto \Theta_n(\hv_n + \Dv_n \widetilde{\ub},\hv_{n})$,
% \begin{equation}
% \Dv_n^\top \nabla \mathrm{F}_n(\hv_n)  + \Dv_n^\top \Av_n(\hv_n) \Dv_n \widetilde{\uv}_n = \zerob.
% \end{equation}
% Hence, \eqref{e:majFnhvnuvt} can be reexpressed as
% \begin{equation}
% (\forall n \in \eN^*)\quad 
% \mathrm{F}_n(\hv_{n+1}) \leq \mathrm{F}_n(\hv_{n})-\frac12 \widetilde{\uv}_n^\top \Dv_n^\top \Av_n(\hv_n) \Dv_n \widetilde{\uv}_n
% \end{equation}
% which yields %\eqref{e:majFnhvnp1hvn}.
\begin{equation}\label{e:majFnhvnp1hvn}
(\forall n \in \eN \setminus \left\{0\right\})\;\;
\mathrm{F}_n(\hv_{n+1})+ \frac12 (\hv_{n+1}-\hv_n)^\top \Av_n(\hv_n)  (\hv_{n+1}-\hv_n)\\ \le \mathrm{F}_n(\hv_{n}).
\end{equation}
In addition, the following recursive relation holds
\begin{align}
(\forall \hb \in \eR^N)\quad
&\mathrm{F}_{n+1}(\hb)= \mathrm{F}_n(\hb)+ \frac12 (\rho_{n+1}-\rho_n)\nonumber\\ 
&- (\rv_{n+1}-\rv_n)^\top \hb+ \frac12 \hb^\top (\Rv_{n+1}-\Rv_n) \hb.
\end{align}
As a consequence of Assumption \ref{a:conv}\ref{a:conviii}, for every \correcEC{$n\in \eN \setminus \left\{0\right\}$}, $\hv_{n+1}$ is $\XX_n$-measurable.
It can thus be deduced from \eqref{e:majFnhvnp1hvn} and the previous two relations that
\begin{equation}\label{e:recurFnrand}
\EC{\mathrm{F}_{n+1}(\hv_{n+1})}{\XX_n} + \frac12 (\hv_{n+1}-\hv_n)^\top 
\Av_n(\hv_n) (\hv_{n+1}-\hv_n)
 \le \;\mathrm{F}_n(\hv_{n})+ \chi_n
\end{equation}
where
\begin{equation}
\chi_n= \frac12 \EC{\rho_{n}-\rho_{n+1}}{\XX_n}- \EC{\rv_n-\rv_{n+1}}{\XX_n}^\top \hv_{n+1}
+ \frac12 \hv_{n+1}^\top \EC{\Rv_n-\Rv_{n+1}}{\XX_n} \hv_{n+1}.
\end{equation}
By using \eqref{e:defsamprho}-\eqref{e:defsampR} with $\vartheta = 1$ and Assumption \ref{a:conv}\ref{a:convibis}, we have
\begin{align}
\chi_n = & \;\frac{1}{2(n+1)} \big(\rho_n-\EC{\|\yv_{n+1}\|^2}{\XX_n}\big)\nonumber\\ 
&- 
\frac{1}{n+1} \big(\rv_n-\EC{\Xv_{n+1}\yv_{n+1}}{\XX_n}\big)^\top \hv_{n+1}\nonumber\\
&+ \frac{1}{2(n+1)} \hv_{n+1}^\top \big(\Rv_n-\EC{\Xv_{n+1}\Xv_{n+1}^\top}{\XX_n}\big) \hv_{n+1}\nonumber\\
= & \;\frac{1}{2(n+1)} \big(\rho_n-\varrho\big) - 
\frac{1}{n+1} \big(\rv_n-\rb\big)^\top \hv_{n+1}\nonumber\\
&+ \frac{1}{2(n+1)} \hv_{n+1}^\top \big(\Rv_n-\Rb\big) \hv_{n+1}
\end{align}
which yields
\begin{align}
|\chi_n| \le & \;\frac{1}{2(n+1)} |\rho_n-\varrho| + 
\frac{1}{n+1} \|\rv_n-\rb\| \|\hv_{n+1}\|\nonumber\\
&+ \frac{1}{2(n+1)} |||\Rv_n-\Rb||| \,\|\hv_{n+1}\|^2.
\end{align}
According to Lemma \ref{le:hbounded}, $(\hv_n)_{n\ge 1}$ is $\as$ bounded, and Assumptions~\ref{a:conv}\ref{a:convi}-\ref{a:conv}\ref{a:convibis}
and Lemma~\ref{a:asympt}\ref{a:asymptii} thus guarantee that
\begin{equation}\label{e:sumchin}
\sum_{n=1}^{+\infty} |\chi_n| <+\infty \qquad \as
\end{equation}
Assumption \ref{a:basic}\ref{a:basicii} entails that, for every $n\in \eN \setminus \left\{0\right\}$, $\mathrm{F}_n$ is lower bounded by
$\inf \Psi > -\infty$. Furthermore, \eqref{e:recurFnrand} leads to
\begin{equation}
\EC{\mathrm{F}_{n+1}(\hv_{n+1})-\inf \Psi}{\XX_n} + \frac12 (\hv_{n+1}-\hv_n)^\top 
\Av_n(\hv_n) (\hv_{n+1}-\hv_n) \le \;\mathrm{F}_n(\hv_{n})-\inf \Psi+ |\chi_n|.
\end{equation}
Since, for every \correcEC{$n\in \eN \setminus \left\{0\right\}$}, $\mathrm{F}_n(\hv_{n})-\inf \Psi$ and $(\hv_{n+1}-\hv_n)^\top 
\Av_n(\hv_n) (\hv_{n+1}-\hv_n)$ are nonnegative, $(\mathrm{F}_n(\hv_{n})-\inf \Psi)_{n\ge 1}$ is
a nonnegative almost supermartingale \cite{Lai_T_2009_jelecthistprobstat_Martingales_sats}.
By invoking now Siegmund-Robbins lemma \cite{Robbins_H_1971_book_convergence_tfnnas}, it can be deduced from \eqref{e:sumchin} that 
the desired convergence results hold.
% $\big(\mathrm{F}_n(\hv_{n})\big)_{n\ge 1}$
% is $\as$ convergent, and $\big((\hv_{n+1}-\hv_n)^\top \Av_n(\hv_n) (\hv_{n+1}-\hv_n)\big)_{n\ge 1}$
% is $\as$ summable.

\section{Proof of Lemma \ref{le:summagrad}} \label{ap:summagrad}
 According to \eqref{e:defQc}, we have, for every $\phi \in \eR$ and \correcEC{$n\in \eN \setminus \left\{0\right\}$},
\begin{equation}
\Theta_n\big(\hv_n-\phi \nabla \mathrm{F}_n(\hv_n),\hv_n\big) = \mathrm{F}_n(\hv_n) - \phi \|\nabla \mathrm{F}_n(\hv_n)\|^2\\
+\frac{\phi^2}{2} \big(\nabla \mathrm{F}_n(\hv_n)\big)^\top \Av_n(\hv_n) \nabla \mathrm{F}_n(\hv_n).
\end{equation}
Let
\begin{equation}
 \Phi_n \in \Argmind{\phi \in \eR}{\Theta_n\big(\hv_n-\phi \nabla \mathrm{F}_n(\hv_n),\hv_n\big)}.
\end{equation}
The following optimality condition holds:
\begin{equation}\label{e:optPhin}
  \big(\nabla \mathrm{F}_n(\hv_n)\big)^\top \Av_n(\hv_n) \nabla \mathrm{F}_n(\hv_n)\, \Phi_n = \|\nabla \mathrm{F}_n(\hv_n)\|^2.
\end{equation}
As a consequence of Assumption~\ref{ass:subspace}, $(\forall \phi\in \eR)$ $\hv_n-\phi \nabla \mathrm{F}_n(\hv_n) \in \Span \Dv_n$.
It then follows from \eqref{e:MMmini} and  \eqref{e:optPhin} that 
\begin{align}\label{e:difFngradFnPhin}
\Theta_n\big(\hv_{n+1},\hv_n\big) &\le \Theta_n\big(\hv_n-\Phi_n \nabla \mathrm{F}_n(\hv_n),\hv_n\big)\nonumber\\
& \le \mathrm{F}_n(\hv_n) -\frac{\Phi_n}{2} \|\nabla \mathrm{F}_n(\hv_n)\|^2
\end{align}
which, by using \eqref{e:majFnhvnuvt}, leads to
\begin{equation}\label{e:majgradsq}
\Phi_n \|\nabla \mathrm{F}_n(\hv_n)\|^2 \le (\hv_{n+1}-\hv_n)^\top 
\Av_n(\hv_n) (\hv_{n+1}-\hv_n).
\end{equation}

Let $\epsilon > 0$.
%and set
%\begin{equation}\label{e:defan}
 %(\forall n \in \eN^*) \qquad
 %\mathrm{a}_n = 
 %\begin{cases}
%\displaystyle  \Phi_n & \mbox{if $\nabla\mathrm{F}_n(\hv_n)  \neq \zerob$}\\
%\epsilon^{-1} & \mbox{otherwise.}
 %\end{cases}
%\end{equation}
%According to \eqref{e:gradFn}-\eqref{e:defcv}, \eqref{e:optPhin} and \eqref{e:difFngradFnPhin}, 
%for every $n\in \eN^*$, \eqref{e:difFngradFnan} is satisfied, and
%$\mathrm{a}_n$ is $\XX_n$-measurable.
Assumption \ref{a:basic}\ref{a:basiciv} and \eqref{e:defA} yield,
for every \correcEC{$n\in \eN \setminus \left\{0\right\}$} ,
\begin{equation}
\Av_n(\hv_n) \preceq  (||| \Rv_n+\Vb_0 |||+\overline{\nu} ||| \Vb |||^2)  \Ib_N.
\end{equation}
Therefore, according to Assumptions \ref{a:conv}\ref{a:basici} and \ref{a:conv}\ref{a:convi}, and Lemma~\ref{a:asympt}\ref{a:asympti}, there exists 
$\Lambda\in\FF$ such that $\PP(\Lambda)=1$ and, for every $\omega\in \Lambda$, 
\begin{equation}\label{e:boundAA}
(\exists n_0 \in \eN \setminus \left\{0\right\})
(\forall n \ge n_0)\quad
\Ob_N\prec \Av_n(\hv_n)(\omega) \preceq  \alpha_\epsilon^{-1} \Ib_N
\end{equation}
where
\begin{equation}\label{e:defalpha}
\alpha_\epsilon= (|||\Rb+\Vb_0|||+\overline{\nu} |||\Vb|||^2+\epsilon)^{-1} > 0.
\end{equation}
Let $\omega \in \Lambda$.
By using now \eqref{e:optPhin}, it can be deduced from \eqref{e:boundAA} that, if $n\ge n_0$
and $\nabla \mathrm{F}_n(\hv_n)(\omega) \neq \zerob$, then
\begin{equation}
 \Phi_n(\omega) \ge \alpha_\epsilon.
\end{equation}
%It follows from \eqref{e:defan} and \eqref{e:defalpha} that, for every $n\ge n_0$,
%\begin{equation}
 %\mathrm{a}_n(\omega) \ge \alpha.
%\end{equation}
%This shows that $\liminf_{n\to+\infty} \mathrm{a}_n \ge \alpha$ $\as$
Then, it follows from  \eqref{e:majgradsq} that
\begin{align}
& \alpha_\epsilon \sum_{n=n_0}^{+\infty}  \|\nabla \mathrm{F}_n(\hv_n)(\omega)\|^2 \nonumber\\
 &\leq 
 \sum_{n=n_0}^{+\infty} \big(\hv_{n+1}(\omega)-\hv_n(\omega)\big)^\top 
\Av_n(\hv_n)(\omega) \big(\hv_{n+1}(\omega)-\hv_n(\omega)\big).
\end{align}
By invoking Lemma  \ref{le:summahvn}, we can conclude that
$(\|\nabla \mathrm{F}_n(\hv_n)\|^2)_{n\ge 1}$ is $\as$ summable.

\section{Proof of Proposition \ref{p:convmain}} \label{ap:convmaini}
It follows from Lemma \ref{le:summahvn} that $\big((\hv_{n+1}-\hv_n)^\top \Av_n(\hv_n) (\hv_{n+1}-\hv_n)\big)_{n\ge 1}$
 converges $\as$ to 0. In addition, we have seen in the proof of Lemma \ref{le:hbounded} that
there exists 
$\Lambda\in\FF$ such that $\PP(\Lambda)=1$ and, for every $\omega\in \Lambda$, \eqref{e:boundA}
holds with $\epsilon \in (0,+\infty)$ and \correcEC{$n_0\in \eN \setminus \left\{0\right\}$}. This implies
that, for every $n \ge n_0$,
\begin{multline}
|||\Rb-\epsilon \Ib_N+\Vb_0|||\, \|\hv_{n+1}(\omega)-\hv_n(\omega)\|^2 \\
\;\;\le \big(\hv_{n+1}(\omega)-\hv_n(\omega)\big)^\top \Av_n(\hv_n)(\omega) 
\big(\hv_{n+1}(\omega)-\hv_n(\omega)\big)
\end{multline}
where $|||\Rb-\epsilon \Ib_N+\Vb_0||| > 0$. Consequently, $(\hv_{n+1}-\hv_n)_{n\ge 1}$ converges $\as$ to $\zerob$.
In addition, according to Lemma~\ref{le:hbounded}, $(\hv_n)_{n\ge 1}$ belongs almost surely to a compact set.
The result is then obtained by invoking Ostrowski's theorem \cite[Theorem 26.1]{Ostrowki_A_M_1973_book_Solution_eebs}.

\noindent\ref{p:convmainii} By using \eqref{e:gradFn}-\eqref{e:defcv}, we have
%, for every $n\in \eN^*$,
\begin{equation}
(\forall \correcEC{n\in \eN \setminus \left\{0\right\}} )\quad
\nabla \mathrm{F}_n(\hv_n)-\nabla F(\hv_n) = (\Rv_n-\Rb)\hv_n - \rv_n+\rb.
\end{equation}
Since $(\hv_n)_{n\ge 1}$ is almost surely bounded, it follows from Lemma \ref{a:asympt}\ref{a:asympti}
that $\big(\nabla\mathrm{F}_n(\hv_n)-\nabla F(\hv_n)\big)_{n\ge 1}$ converges $\as$ to $\zerob$.
Since Lemma \ref{le:summagrad} ensures that $\big(\nabla \mathrm{F}_n(\hv_n)\big)_{n\ge 1}$ converges $\as$ to $\zerob$,
$\big(\nabla F(\hv_n)\big)_{n\ge 1}$ also converges $\as$ to $\zerob$. There thus exists 
$\Lambda\in\FF$ such that $\PP(\Lambda)=1$ and, for every $\omega\in \Lambda$,
$\nabla F\big(\hv_n(\omega)\big)  \to \zerob$. Let $\widehat{\hb}$ be a cluster point of $\big(\hv_n(\omega)\big)_{n\ge 1}$.
There exists a subsequence $\big(\hv_{k_n}(\omega)\big)_{n\ge 1}$ such that $\hv_{k_n}(\omega) \to \widehat{\hb}$.
As we have assumed that the regularization functions $(\psi_s)_{1\le s \le S}$ are continuously differentiable (see Assumption \ref{a:basic}\ref{a:basicii}),
$F$ is also continuously differentiable, and 
\begin{equation} 
\nabla F(\widehat{\hb}) = \lim_{n\to +\infty} \nabla F\big(\hv_{k_n}(\omega)\big) = \zerob.
\end{equation}
This means that $\widehat{\hb}$ is a critical point of $F$.

\noindent\ref{p:convmainiii}  Because of Assumption \ref{a:conv}\ref{a:basici}, when the functions $(\psi_s)_{1\le s \le S}$
are convex, $F$ is a strongly convex function. It thus possesses a unique critical point $\widehat{\hb}$, which is the global minimizer
of $F$. It follows from \ref{p:convmaini} and \ref{p:convmainii} that, almost surely, the unique cluster point of $(\hv_n)_{n\ge 1}$ is $\widehat{\hb}$,
which shows that $\hv_n \to \widehat{\hb}$ $\as$

\section*{Acknowledgements} The authors would like to thank Professors Markus V. S. Lima and Paulo S. R. Diniz from Federal University of Rio de Janeiro for kindly making us accessible some of their codes. We would also like to thank Professor Anisia Florescu from Dun\u{a}rea de Jos University of Gala\c{t}i for providing the initial motivation for this work.

%\small
\bibliographystyle{IEEEbib}
\bibliography{stoch}

\end{document}